\newtheorem{thm}{Theorem}[section]
\newtheorem{lem}[thm]{Lemma}
\newtheorem{pro}[thm]{Proposition}
\theoremstyle{definition}
\theoremstyle{remark}
\newtheorem{rem}[thm]{Remark}
\newcommand{\K}{\mathbb{K}}
\newcommand{\qH}{\mathbb{H}}
\newcommand{\R}{\mathbb{R}}
\newcommand{\C}{\mathbb{C}}
\newcommand{\Ca}{\C a}
\newcommand{\cB}{\mathcal{B}}
\newcommand{\cN}{\mathcal{N}}
\newcommand{\cR}{\mathcal{R}}
\newcommand{\al}{\alpha}
\newcommand{\be}{\beta}
\newcommand{\ga}{\gamma}
\newcommand{\de}{\delta}
\newcommand{\ep}{\varepsilon}
\newcommand{\om}{\omega}
\newcommand{\si}{\sigma}
\newcommand{\la}{\lambda}
\renewcommand{\phi}{\varphi}
\newcommand{\CAT}{\operatorname{CAT}}
\newcommand{\diam}{\operatorname{diam}}
\newcommand{\dist}{\operatorname{dist}}
\newcommand{\const}{\operatorname{const}}
\newcommand{\iso}{\operatorname{Isom}}
\newcommand{\hyp}{\operatorname{H}}
\newcommand{\grad}{\operatorname{grad}}
\newcommand{\im}{\operatorname{Im}}
\renewcommand{\d}{\partial}
\newcommand{\di}{\d_{\infty}}
\newcommand{\sm}{\setminus}
\newcommand{\sub}{\subset}
\newcommand{\ov}{\overline}
\newcommand{\wt}{\widetilde}
\begin{document}

\title{Boundary at infinity of symmetric rank one spaces}
\author{S.~Buyalo\footnote{Supported by RFFI Grant 08-01-00079a}
\ and A. Kuznetsov}

\date{}
\maketitle

\begin{abstract} We show that canonical Carnot-Carath\'eodory
spherical and horospherical metrics, which are defined on the boundary
at infinity of every rank one symmetric space of non-compact type,
are visual, i.e., they are bilipschitz equivalent with universal
bilipschitz constants to the inverse exponent of Gromov products
based in the space and on the boundary at infinity respectively.
\end{abstract}

\section{Introduction}

Let
$M$
be a rank one symmetric space of non-compact type. There
are two classes of natural metrics on the boundary
at infinity
$\di M$.
First, these are spherical and horospherical metrics. Given
$o\in M$,
we consider for every
$t>0$
the Riemannian metric
$ds_t^2$
induced from
$M$
on the sphere
$S_t\sub M$
of radius
$t$
centered at
$o$.
Identifying
$S_t$, $\di M$
with the unit sphere
$U_oM\sub T_oM$
via the radial projection from
$o$,
we consider
$ds_t^2$
as the Riemannian metric on
$U_oM$
for all
$t>0$.
Then there exists a limit
$$ds_\infty=\lim_{t\to\infty}e^{-t}ds_t,$$
which is a Carnot-Carath\'eodory metric. The respective
(bounded) interior distance is the {\em spherical} metric
$d_\infty$
on
$\di M$
based at
$o$.

Similarly, given a Busemann function
$b:M\to\R$
centered at
$\om\in\di M$,
we consider for every
$t>0$
the Riemannian metric
$ds_{b,t}^2$
induced from
$M$
on the horosphere
$H_{b,t}=b^{-1}(t)$.
Identifying
$H_{b,t}$, $\di M\sm\{\om\}$
with fixed horosphere
$H=H_{b,0}$
via the radial projection from
$\om$,
we consider
$ds_{b,t}^2$
as the Riemannian metric on
$H$
for all
$t\in\R$.
Then there exists a limit
$$ds_b=\lim_{t\to\infty}e^{-t}ds_{b,t},$$
which is a Carnot-Carath\'eodory metric. The respective
(unbounded) interior distance is the {\em horospherical} metric
$d_b$
on
$\di M\sm\{\om\}$
associated with
$b$.

Second, since
$M$
is a
$\CAT(-1)$-space,
in particular,
$M$
is Gromov hyperbolic, there are visibility metrics on the Gromov
boundary at infinity of
$M$.
The last coincides with the geodesic boundary
$\di M$,
and for
$\xi$, $\eta\in\di M$,
we have the Gromov product
$(\xi|\eta)_o$
based at
$o$
and similarly for
$\xi$, $\eta\in\di M\sm\{\om\}$,
we have the Gromov product
$(\xi|\eta)_b$
associated with a Busemann function
$b$.
Any (bounded) metric on
$\di M$,
which is bilipschitz equivalent to the function
$(\xi,\eta)\to a^{-(\xi|\eta)_o}$,
and any (unbounded) metric on
$\di M\sm\{\om\}$,
which is bilipschitz equivalent to the function
$(\xi,\eta)\to a^{-(\xi|\eta)_b}$
for some
$a>1$,
is called a {\em visibility} metric.

These two classes of metrics are certainly well known, but surprisingly,
we did not find in literature an answer to the natural
question how are they related (except for the case
$M=\hyp^n$
is real hyperbolic). The aim of the paper is to fill in this gap.
We obtain the following.

\begin{thm}\label{thm:main1} There are constants
$c_1$, $c_2>0$
such that for every symmetric rank one space
$M$
of non-compact type, the horospherical metric
$d_b$
on
$\di M\sm\{\om\}$,
associated with every Busemann function
$b:M\to\R$
centered at an arbitrary point
$\om\in\di M$,
satisfies
$$c_1e^{-(\xi|\eta)_b}\le d_b(\xi,\eta)\le c_2e^{-(\xi|\eta)_b}$$
for each
$\xi$, $\eta\in\di M\sm\{\om\}$.
\end{thm}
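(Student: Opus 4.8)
\smallskip
\noindent\emph{Proof plan.} The strategy is to pass to the horospherical (nilpotent upper half-space) model of $M$, to identify $d_b$ with a Carnot--Carath\'eodory metric and $(\xi|\eta)_b$ with a maximal ``height'' along a geodesic, and to reduce the assertion to a single comparison whose constants do not depend on the family or on the dimension --- the only genuinely hard point.

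\noindent\emph{The model.} Fix $\om$ and write $M\cong N\times(0,\infty)$ with coordinates $(\xi,h)$, where $N$ is the Iwasawa nilpotent subgroup of $\iso M$ fixing $\om$; thus $N$ is abelian if $M=\R\hyp^n$ and of Heisenberg (H-)type otherwise, $\om$ corresponds to $h=\infty$, and one may choose an $N$-invariant Busemann function with $b=-\log h$. Under radial projection from $\om$ every horosphere is identified with $N$ by the identity map, and the rescaled metrics $e^{-t}ds_{b,t}$ converge precisely to the left-invariant sub-Riemannian metric on $N$ carried by the first grading layer $V_1$; hence $d_b=d_{CC}$, the Carnot--Carath\'eodory metric of $N$. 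The one-parameter group $A\subset\iso M$ of the geodesic line $\gamma_{e\om}$ (with $e\in N$ the identity) acts on $N$ as the grading dilations $\de_\mu$ --- multiplying $V_1$ by $\mu$ and the second layer $V_2$ by $\mu^2$ --- and scales $h$ by $\mu$; so $d_b$ is left-$N$-invariant, $1$-homogeneous under $\de_\mu$, and, with universal constants (the H-type structure being uniform), bilipschitz to the homogeneous gauge $|\xi|=\max\{|\xi_1|,\,|\xi_2|^{1/2}\}$, where $\xi=\xi_1\xi_2$ is the layer decomposition.

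\noindent\emph{Gromov products.} In any $\CAT(-1)$ space, up to the universal hyperbolicity constant, $(\xi|\eta)_b=\min_{x\in\gamma_{\xi\eta}}b(x)+O(1)$: the point $m$ on $\gamma_{\xi\eta}$ where $b$ is least --- equivalently, where $\gamma_{\xi\eta}$ is tangent to the horosphere through it --- realizes the Gromov product up to a bounded error, by a standard estimate. Hence $e^{-(\xi|\eta)_b}\asymp h_{\max}(\xi,\eta):=\max_{x\in\gamma_{\xi\eta}}h(x)$; and since $A$ scales $h$ while $N$ preserves our $b$, the function $h_{\max}$ is itself left-$N$-invariant and $1$-homogeneous under $\de_\mu$. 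Combined with the previous paragraph, the theorem becomes equivalent to
$$ h_{\max}(e,\eta)\ \asymp\ |\eta| \qquad\text{for all }\eta\in N, $$
with constants independent of $M$; equivalently, $h_{\max}(e,\cdot)$ must be bounded above and below on the compact gauge-sphere $\{|\eta|=1\}$, uniformly over the four families and over the dimension.

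\noindent\emph{The uniform estimate.} For a fixed $M$ this is immediate by continuity and compactness; the uniformity is the crux, and I would obtain it by reducing to finitely many model spaces. The triple of ideal points $\{e,\om,\eta\}$ spans a totally geodesic subspace $Y\subset M$ which, by the classification of totally geodesic subspaces of rank one symmetric spaces (using Artin's theorem when $M=\mathbb{O}\hyp^2$), is isometric up to a bounded rescaling to a member of a finite list of spaces $\K'\hyp^{m}$ of bounded dimension, e.g.\ $\R\hyp^2$, $\C\hyp^2$ or $\qH\hyp^2$. As $Y$ contains the line $\gamma_{e\om}$, its $A$-action agrees with that of $M$, so the layer decomposition and the gauge of $N$ restrict isometrically to the Iwasawa group $N_Y=\di Y\sm\{\om\}$, while $b$, the geodesic $\gamma_{e\eta}$ and hence $h_{\max}(e,\eta)$ are all computed inside $Y$; it then remains only to check $h_{\max}(e,\cdot)\asymp|\cdot|$ in each of finitely many fixed spaces, where compactness applies once more. (Alternatively one may use the Kor\'anyi inversion $\iota$ of $N$, which is the boundary action of an isometry of $M$ exchanging $e$ and $\om$: it sends $\gamma_{e\eta}$ to the vertical geodesic over $\iota\eta$ and satisfies $b\circ\iota=b'+\const$, with $b'$ the Busemann function at $e$, whence $h_{\max}(e,\eta)\asymp\sup_{s>0}e^{-b'(\iota\eta,s)}\asymp 1/|\iota\eta|\asymp|\eta|$ by the scale-inverting property of $\iota$.) The main obstacle throughout is exactly this uniformity of all bilipschitz constants across $\R$, $\C$, $\qH$, $\mathbb{O}$ and across dimensions; the remaining steps are soft and standard.
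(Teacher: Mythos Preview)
Your strategy is genuinely different from the paper's. The paper argues directly with curve lengths on horospheres at levels dictated by the equiradial points of the ideal triangle $\om\xi\eta$: for the lower bound (Lemma~\ref{lem:below}) a near-optimal horizontal curve is projected to the horosphere at level $(\xi|\eta)_b+1+\de$, where its endpoints are at ambient distance $\ge 2$ by Lemma~\ref{lem:equirad_below}; for the upper bound one works at the equiradial level $t=(\xi|\eta)_b$, where the endpoints satisfy $|uv|\le\de$, and the decisive input is Lemma~\ref{lem:basic}, the uniform bound $d_E^2/d_H\le 17$ obtained from an explicit commutator computation in $N$ together with the Heintze--Im~Hof estimate (Theorem~\ref{thm:horo_dist}). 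You instead exploit the $N\rtimes A$ symmetry to reduce everything to a comparison of two $1$-homogeneous left-invariant functions on a single sphere, and then handle uniformity across $\K$ and $n$ by passing to a totally geodesic $\K\hyp^2$ containing the three ideal points --- a clean device that makes the universality of the constants conceptually transparent.

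There is, however, a real gap. You assert that $d_{CC}$ is bilipschitz to the homogeneous gauge $|\eta|=\max\{|\eta_1|,|\eta_2|^{1/2}\}$ with \emph{universal} constants, dismissing this as ``the H-type structure being uniform''; but that is precisely the nontrivial analytic content, essentially equivalent to the paper's Lemma~\ref{lem:basic}. Your totally geodesic reduction does not supply it either: while the gauge and $h_{\max}$ restrict correctly to $N_Y\sub N$, the Carnot--Carath\'eodory distance only satisfies $d_b\le d_b^Y$ (horizontal curves in $N$ connecting $e,\eta$ may leave $N_Y$), which is the wrong inequality for a lower bound. Thus your reduction does give the upper half of the theorem, $d_b\le d_b^Y\le C\,h_{\max}^Y=C\,h_{\max}$ with $C$ ranging over finitely many model spaces, but leaves $d_b\ge c\,e^{-(\xi|\eta)_b}$ unproved. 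That direction is in fact the easy one --- it is the paper's short Lemma~\ref{lem:below}, or equivalently the trivial half of ball--box using only that $[v,w]=2\im(v,w)$ has uniformly bounded norm --- but you must actually supply it. The Kor\'anyi-inversion alternative is likewise incomplete as written: evaluating $\sup_s e^{-b'(\iota\eta,s)}$ and verifying $|\iota\eta|\cdot|\eta|\asymp 1$ with uniform constants both require explicit control of the gauge in the model, which brings you back to the same unproved comparison.
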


\begin{thm}\label{thm:main2} There are constants
$c_1$, $c_2>0$
such that for every symmetric rank one space
$M$
of non-compact type, the spherical metric
$d_\infty$
on
$\di M$
based at any
$o\in M$
satisfies
$$c_1e^{-(\xi|\eta)_o}\le d_\infty(\xi,\eta)\le c_2e^{-(\xi|\eta)_o}$$
for each
$\xi$, $\eta\in\di M$.
\end{thm}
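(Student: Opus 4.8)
The plan is to deduce Theorem~\ref{thm:main2} from Theorem~\ref{thm:main1}. First a reduction: both $d_\infty$ and the Gromov product $(\cdot|\cdot)_o$ depend only on $o$, while in Theorem~\ref{thm:main1} the point $\om$ is arbitrary; so, given $\xi\neq\eta$ in $\di M$ (the case $\xi=\eta$ being trivial, both sides vanishing), it suffices to prove the two-sided estimate after fixing \emph{any} $\om\in\di M\sm\{\xi,\eta\}$. Fix such an $\om$ and a Busemann function $b$ centered at $\om$ with $b(o)=0$; write $d_b$ and $(\cdot|\cdot)_b$ for the associated horospherical metric and Gromov product. The argument then rests on two ingredients, a soft one and a geometric one.

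\emph{Ingredient (A): a Gromov product identity.} Every rank one symmetric space of non-compact type is $\CAT(-1)$ (sectional curvatures normalized to $[-4,-1]$), hence $\de$-hyperbolic with $\de$ independent of $M$. Unwinding the defining limits of the two Gromov products, using the Busemann cocycle and thin-triangle estimates, gives
\[ (\xi|\eta)_o = (\xi|\eta)_b + (\xi|\om)_o + (\eta|\om)_o + O(1), \]
with universal error; equivalently $e^{-(\xi|\eta)_o}$ is comparable, with universal constants, to $e^{-(\xi|\eta)_b}\,e^{-(\xi|\om)_o}\,e^{-(\eta|\om)_o}$. (In effect $\zeta\mapsto e^{-(\zeta|\om)_o}$ is the conformal factor of the change of basepoint from the visual quasi-metric based at $b$ to the one based at $o$.)

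\emph{Ingredient (B): a conformal comparison of the two Carnot--Carath\'eodory metrics.} I claim
\[ d_\infty(\xi,\eta) \asymp e^{-(\xi|\om)_o-(\eta|\om)_o}\,d_b(\xi,\eta), \]
with universal constants, and this is where the geometry of $M$ is really used. I would prove it in two steps. (B1) \emph{Infinitesimal comparison.} Under the radial projection from $\om$ together with the identification $\di M\cong U_oM$ (radial projection from $o$), the horosphere $H$ is identified, off the direction of $\om$, with $U_oM$ minus one point, matching the horizontal distributions of $ds_b$ and $ds_\infty$; on horizontal vectors $ds_\infty\asymp\phi\,ds_b$ for a positive weight $\phi$ with $\phi(\zeta)\asymp e^{-2(\zeta|\om)_o}$. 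The factor $e^{-2(\zeta|\om)_o}$ is the Jacobian (after the $e^{-t}$ rescalings, in the limit $t\to\infty$) of the double radial projection relating a metric sphere about $o$ to a horosphere about $\om$; it can be checked directly, e.g.\ by comparing, for $\zeta$ near $\om$, the $d_\infty$- and $d_b$-diameters of the relevant boundary pieces, and in the model case $M=\hyp^2$ it is exactly the stereographic weight $2/(1+|\zeta|^2)$. (B2) \emph{Integration.} Since $d_\infty(\xi,\eta)=\inf_\ga\int_\ga ds_\infty$ over horizontal curves in the full boundary $U_oM\cong\di M$ (so, in the $d_b$-picture, $\ga$ may run through $\om$), one has $d_\infty(\xi,\eta)\asymp\inf_\ga\int_\ga\phi\,d\ell$ with $\ell$ the $ds_b$-length. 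Comparing $d_b$-geodesics with curves passing through $\om$ --- a ray from $\xi$ towards $\om$ has $\int\phi\,d\ell\asymp e^{-(\xi|\om)_o}$ --- and using the slow variation of $(\cdot|\om)_o$ relative to $d_b$ (i.e.\ $|(\xi|\om)_o-(\eta|\om)_o|=O(1)$ whenever $d_b(\xi,\eta)\le e^{(\xi|\om)_o}\wedge e^{(\eta|\om)_o}$) together with the triangle-inequality bound $d_b(\xi,\eta)\lesssim e^{(\xi|\om)_o}+e^{(\eta|\om)_o}$, one obtains the displayed comparison in the two regimes $d_b(\xi,\eta)\lesssim e^{(\xi|\om)_o}\wedge e^{(\eta|\om)_o}$ and $d_b(\xi,\eta)\gtrsim e^{(\xi|\om)_o}\wedge e^{(\eta|\om)_o}$, subdividing $\ga$ when necessary.

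Combining (A), (B), and Theorem~\ref{thm:main1} applied to $b$ (so $d_b(\xi,\eta)\asymp e^{-(\xi|\eta)_b}$) yields
\[ d_\infty(\xi,\eta) \asymp e^{-(\xi|\om)_o-(\eta|\om)_o}\,d_b(\xi,\eta) \asymp e^{-(\xi|\om)_o-(\eta|\om)_o}\,e^{-(\xi|\eta)_b} \asymp e^{-(\xi|\eta)_o}, \]
with universal constants, which is the assertion of Theorem~\ref{thm:main2}. The main obstacle is Ingredient (B), and within it step (B2): one must control the $ds_\infty$-minimizing curves, which --- unlike the $d_b$-geodesics --- may be forced to pass through $\om$, and do so with constants independent of $M$; that uniformity is the point where one genuinely uses the geometry of rank one symmetric spaces rather than soft $\CAT(-1)$ reasoning, falling back if needed on the classification (real, complex, quaternionic hyperbolic spaces and the Cayley hyperbolic plane, or rather their common nilpotent-boundary description).
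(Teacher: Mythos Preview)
Your plan differs from the paper's, and the gap you yourself flag at step (B2) is real and unclosed. Establishing the two-sided conformal comparison $d_\infty(\xi,\eta)\asymp e^{-(\xi|\om)_o-(\eta|\om)_o}d_b(\xi,\eta)$ for an \emph{arbitrary} $\om$ requires controlling how the weight $\phi(\zeta)\asymp e^{-2(\zeta|\om)_o}$ varies along nearly-minimizing horizontal curves, in both directions; the sketch you give (slow variation of $(\cdot|\om)_o$, splitting into regimes, curves through $\om$) is plausible heuristics but not a proof, and ``falling back on the classification'' is not an argument. Since both bounds in Theorem~\ref{thm:main2} are supposed to come out of (B), neither is actually established.

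The paper sidesteps (B) entirely by two devices. First, the lower bound $d_\infty(\xi,\eta)\ge c_1e^{-(\xi|\eta)_o}$ is proved \emph{directly}, without reference to $d_b$ or Theorem~\ref{thm:main1}: one takes an almost-minimizing $E$-horizontal curve in $\di M$, projects it radially from $o$ to a sphere $S_t$ with $t=(\xi|\eta)_o+1+\de$, and uses the equiradial points of the triangle $o\xi\eta$ to bound the endpoints apart by~$2$. Second, for the upper bound the paper does not take $\om$ arbitrary but chooses it \emph{antipodal to $\xi$ through $o$}, i.e.\ $o\in\om\xi$, so that $(\xi|\om)_o=0$; when $(\xi|\eta)_o\ge 1+\de$ the $\de$-triple property forces $(\eta|\om)_o\le\de$, and the paper shows directly (via equiradial points of $\om\xi\eta$ and $o\xi\eta$) that $|(\xi|\eta)_b-(\xi|\eta)_o|\le c_3$. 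With this choice of $\om$ one only needs the trivial one-sided inequality $d_\infty\le d_b$ (which follows since the horosphere-to-sphere map has conformal factor $\le 1$ on $E(-1)$), and then $d_\infty\le d_b\le c_2e^{-(\xi|\eta)_b}\le c_2e^{c_3}e^{-(\xi|\eta)_o}$. The remaining case $(\xi|\eta)_o<1+\de$ is handled by the uniform bound $d_\infty\le\diam\di M$. In short: your Ingredient~(A) is fine, but the paper replaces your Ingredient~(B) by a clever choice of $\om$ that makes the conformal factor $\approx 1$ and reduces to the easy inequality $ds_\infty\le ds_b$.
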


\begin{rem}\label{rem:real_case} In the case
$M=\hyp^n$, $n\ge 2$,
is real hyperbolic, the metric
$2d_\infty$
is isometric to the standard metric of the unit sphere
$S^{n-1}\sub\R^n$
for every base point
$o\in\hyp^n$.
On the other hand, the function
$(\xi,\eta)\mapsto e^{-(\xi|\eta)_o}$
coincides with half of the chordal metric,
$$e^{-(\xi|\eta)_o}=\frac{1}{2}|\xi-\eta|$$
for every
$\xi$, $\eta\in S^{n-1}$
as an easy argument shows (see \cite[sect.~2.4.3]{BS}).

Similarly, for every Busemann function
$b:\hyp^n\to\R$
centered at
$\om\in\di\hyp^n$,
the horospherical metric
$d_b$
is isometric to the standard
$\R^{n-1}$,
and an easy calculation in the upper half-space model shows that
$d_b(\xi,\eta)=e^{-(\xi|\eta)_b}$
for every
$\xi$, $\eta\in\di\hyp^n\sm\{\om\}$.
\end{rem}

\begin{rem}\label{rem:complain_case} For the case
$M=\C\hyp^2$
is the complex hyperbolic plane, a weaker version of
Theorem~\ref{thm:main2} is obtained in \cite{Ku}.
\end{rem}

\begin{rem}\label{rem:metric} It is proved in \cite{Bo}
that the function
$(u,u')\mapsto |uu'|:=e^{-(u|u')_o}$
is a metric on the boundary at infinity
$\di X$
of any
$\CAT(-1)$-space
$X$
for every
$o\in X$.
Moreover, it is proved in \cite{FS} that
$\di X$
endowed with this metric is a {\em Ptolomy} metric space,
i.e., the Ptolomy inequality
$|xy|\cdot|uv|\le|xu|\cdot|yv|+|xv|\cdot|yu|$
holds for all
$x$, $y$, $u$, $v\in\di X$.
\end{rem}

\section{Preliminaries}

\subsection{Symmetric rank one spaces}

Every rank one symmetric space
$M$
of non-compact type is a hyperbolic space
$\K\hyp^n$
over the real numbers
$\K=\R$,
the complex numbers
$\K=\C$,
the quaternions
$\K=\qH$,
the octonions
$\K=\Ca$,
and
$\dim M=n\cdot\dim\K$,
where
$n\ge 2$
and
$n=2$
in the case
$\K=\Ca$
(see e.g. \cite{Wo}).
We use the standard notation
$TM$
for the tangent bundle of
$M$
and
$UM$
for the subbundle of the unit vectors.

\subsubsection{Curvature operator}

One of equivalent characterizations of Riemannian symmetric
spaces is that their curvature tensor is parallel,
$\nabla\cR=0$,
see e.g. \cite{Wo}. It follows that for every unit vector
$u\in U_oM$,
where
$o\in M$,
the eigenspaces
$E_u(\la)$
of the {\em curvature operator}
$\cR(\cdot,u)u:u^\bot\to u^\bot$,
where
$u^\bot\sub T_oM$
is the subspace orthogonal to
$u$,
are parallel along the geodesic
$\ga(t)=\exp_o(tu)$, $t\in\R$, and the respective
eigenvalues
$\la$
are constant along
$\ga$.
Note that
$u^\bot$
is identified with the tangent space
$T_u(U_oM)$
of the unit sphere
$U_oM$
at
$u$.

If
$M=\K\hyp^n$, $n\ge 2$
($n=2$
in the case of octonions), then the eigenvalues of curvature operator
$\cR(\cdot,u)u$
are
$\la=-1,-4$.
The dimensions of the respective eigenspaces are
$\dim E_u(-1)=(n-1)\dim\K$, $\dim E_u(-4)=\dim\K-1$,
$u^\bot=E_u(-1)\oplus E_u(-4)$.
In view of Remark~\ref{rem:real_case}, we assume that
$\K\neq\R$.
Then
$E_u(-4)\neq\{0\}$
and the sectional curvatures of
$M$
are pinched,
$-4\le K_\si\le -1$.
We denote by
$E(\la)$
the subbundle of the tangent bundle
$T(U_oM)$
with fibers
$E_u(\la)$, $u\in U_oM$, $\la=-1,-4$.

\subsection{Boundary at infinity and Carnot-Carath\'eodory metrics}

The (geodesic) boundary at infinity
$\di M$
of
$M$
consists of the equivalence classes of geodesic rays in
$M$,
where two rays are equivalent if they are at a finite
Hausdorff distance from each other, and it is equipped
with the standard (or cone) topology. In this topology,
the map
$r_o:U_oM\to\di M$,
which associates to
$u\in U_oM$
the class of the geodesic ray
$\exp_o(tu)$, $t\ge 0$,
is homeomorphism for every
$o\in M$.

\subsubsection{Busemann functions and horospheres}

The Busemann function
$b=b_\ga:M\to\R$
associated with the geodesic ray
$\ga\in\om\in\di M$,{
is defined by
$b(x)=\lim_{t\to\infty}(|x\ga(t)|-t)$.
In this case, we say that
$b$
is centered at
$\om$.
We denote by
$\cB(\om)$
the set of the Busemann functions centered at
$\om$.
Two functions
$b$, $b'\in\cB(\om)$
differ by a constant and, moreover,
$\cB(\om)$
is parametrized by the values of
$b\in\cB(\om)$
at a fixed point
$o\in M$,
which might be arbitrary.

For
$b\in\cB(\om)$, $t\in\R$,
the {\em horosphere}
$H_{b,t}=b^{-1}(t)$
is a smooth hypersurface in
$M$.
We denote again by
$E(\la)$
the subbundle of the tangent bundle
$TH_{b,t}$,
the fiber of which at every point
$x\in H_{b,t}$
is the eigenspace
$E_u(\la)$
of the curvature operator
$\cR(\cdot,u)u$,
where
$u=\grad b(x)$, $\la=-1,-4$.
We denote by
$r_{b,t}:H_{b,t}\to\di M\sm\{\om\}$
the radial projection map which associates to every
$x\in H_{b,t}$
the class
$\ga(\infty)\in\di M$,
where
$\ga:\R\to M$
is the (unique) geodesic with
$\ga(-\infty)=\om$
passing through
$x$.
Then
$r_{b,t}$
is a homeomorphism.

We denote by
$\rho_o:M\sm\{o\}\cup\di M\to U_oM$
the {\em radial projection},
$\rho_o(x)$
is the unit vector tangent to the geodesic
$ox$
at
$o$.

\begin{lem}\label{lem:horo_to_sphere} Given
$o\in M$, $\om\in\di M$, $b\in\cB(\om)$,
the composition
$f=\rho_o\circ r_{b,0}:H=H_{b,0}\to U_oM$
is a Lipschitz embedding, its differential
$df$
is well defined on the
$E(-1)$-subbundle of
$TH$,
preserves the
$E(-1)$-subbundles of the tangent bundles
$TH$, $T(U_oM)$
and is conformal on them.
\end{lem}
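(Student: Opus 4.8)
The plan is to reduce the assertion to an explicit computation of the directional derivatives of $f$ along $E(-1)$ by means of Jacobi fields along two geodesics asymptotic to a common point of $\di M$. In a parallel frame the Jacobi equation decouples on the eigenbundles into $J''=J$ on $E(-1)$ and $J''=4J$ on $E(-4)$, and the gap between these rates is precisely what makes the statement hold on $E(-1)$. Parametrize the geodesics issuing from $\om$ by the value of $b$, so that $b(\ga(t))=t$ whenever $\ga(-\infty)=\om$. Fix $x\in H$, let $\ga=\ga_x$ be the geodesic with $\ga(-\infty)=\om$, $\ga(0)=x$, and put $\xi=\ga(+\infty)=r_{b,0}(x)$, $v=f(x)=\rho_o(\xi)$; let $c$ be the unit-speed ray from $o$ to $\xi$, so $c(0)=o$, $c'(0)=v$, and the rays $\ga|_{[0,\infty)}$ and $c$ are asymptotic. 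For $t>b(o)$ the flow $\phi_t\colon H\to H_{b,t}$, $\phi_t(x)=\ga_x(t)$, is a well-defined smooth map (the equality $\ga_x(t)=o$ would force $t=b(o)$), $r_{b,0}=r_{b,t}\circ\phi_t$, hence $f=\lim_{t\to\infty}F_t$ with $F_t:=\rho_o\circ\phi_t$ smooth; along any direction in which the differentials converge locally uniformly one then has $df=\lim_t dF_t$.

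Now take $w$ in the fibre $E_x(-1)$. The vector $d\phi_t(w)$ is the value at time $t$ of the Jacobi field along $\ga$ with value $w$ at $0$ that decays as $t\to-\infty$; since $w$ is a $(-1)$-eigenvector and the eigenbundles are parallel along $\ga$, this field stays in $E(-1)$ and equals $e^{t}P_t(w)$, where $P_t$ denotes parallel transport along $\ga$. Apply $d\rho_o$ at $y_t:=\ga(t)$: writing $c_t$ for the ray $o\to y_t$ and $r_t=|o\,y_t|$, the map $d\rho_o$ annihilates the $c_t'(r_t)$-direction, contracts $E_{c_t'(r_t)}(-1)$ by $1/\sinh r_t$ and $E_{c_t'(r_t)}(-4)$ by $2/\sinh 2r_t$, and then parallel transports along $c_t$ back to $c_t'(0)$. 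Since $\ga|_{[0,\infty)}$ and $c$ are asymptotic with the common endpoint $\xi$, comparison estimates ($M$ is $\CAT(-1)$, with $-4\le K\le-1$) give $c_t\to c$, $r_t-t\to -b_\xi(x)$ (with $b_\xi$ the Busemann function centred at $\xi$, normalized by $b_\xi(o)=0$), and the angle $\theta_t$ between $\ga'(t)$ and $c_t'(r_t)$ is $O(e^{-t})$; as the curvature eigenvalues are locally constant, the eigenspaces of $\cR(\cdot,\ga'(t))\ga'(t)$ and of $\cR(\cdot,c_t'(r_t))c_t'(r_t)$ also differ by $O(e^{-t})$. Decomposing $e^{t}P_t(w)\in E_{\ga'(t)}(-1)$ in the frame adapted to $c_t'(r_t)$, the $c_t'(r_t)$-component is $O(e^{t}\theta_t)=O(1)$ and is killed by $d\rho_o$; the $E_{c_t'(r_t)}(-4)$-component is also $O(1)$ and, after the contraction by $2/\sinh 2r_t=O(e^{-2t})$, disappears in the limit; the $E_{c_t'(r_t)}(-1)$-component equals $e^{t}\bigl(1+O(e^{-t})\bigr)$ times an orthogonal image of $w$, and $d\rho_o$ scales it by $1/\sinh r_t\sim 2e^{-r_t}$. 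Letting $t\to\infty$ one finds that $dF_t(w)$ converges, its limit lies in $E_v(-1)$, and equals $2e^{b_\xi(x)}$ times the image of $w$ under an isometry $E_x(-1)\to E_v(-1)$ (the limit of the orthogonal maps above composed with the parallel transports along the $c_t$). Thus $df$ is defined on the $E(-1)$-subbundle of $TH$, sends it into the $E(-1)$-subbundle of $T(U_oM)$, and is conformal there, with factor $2e^{b_\xi(x)}$.

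Since $r_{b,0}$ and $\rho_o|_{\di M}=r_o^{-1}$ are homeomorphisms, $f$ is a topological embedding; and a comparison estimate bounding $b_\xi(x)$ by $b(o)+\const$ uniformly for $x\in H$ shows the conformal factor is bounded, while the same decomposition performed on the $E(-4)$-fibre keeps $\|dF_t\|$ bounded there as well (the leading $E(-4)$-term scales by $\sim 4e^{2b_\xi(x)}$, the $E(-1)$-byproduct stays $O(1)$ because $e^{2t}\theta_t$ does, the radial byproduct is annihilated), so $f$ is Lipschitz. The main obstacle is exactly the control of the byproduct terms above: one has to show that re-expressing the purely $E(-1)$-valued, $e^{t}$-sized Jacobi field $d\phi_t(w)$ in the eigenframe of the \emph{other} geodesic $c_t$ produces $E(-4)$- and radial components small enough to be annihilated in the limit by $d\rho_o$. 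This hinges on the exponential convergence of asymptotic geodesics in pinched negative curvature forcing the two eigenframes to align at rate $O(e^{-t})$, together with the faster ($e^{-2t}$ versus $e^{-t}$) contraction of $d\rho_o$ on the $E(-4)$-bundle --- which is where the separation of the curvature eigenvalues $-1$ and $-4$ is used.
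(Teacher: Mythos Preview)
Your approach is the paper's: approximate $f$ by the smooth maps $F_t=\rho_o\circ\phi_t$, express $d\phi_t$ via the stable Jacobi field along $\ga$, re-expand in the eigenframe of the secant geodesic $c_t$, and use the contraction rates $1/\sinh r_t$ and $2/\sinh 2r_t$ of $d\rho_o$ together with the decay of the angle $\theta_t=\alpha_t$ to control the cross-terms. The conformal factor $2e^{b_\xi(x)}$ you find agrees with the paper's $2e^{-b_{\ga_x}(o)}$.

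There is one slip to fix in the $E(-4)$ Lipschitz estimate. You assert that ``the $E(-1)$-byproduct stays $O(1)$ because $e^{2t}\theta_t$ does,'' but with your own bound $\theta_t=O(e^{-t})$ one only gets $e^{2t}\theta_t=O(e^t)$, which blows up. What actually stays bounded is the quantity \emph{after} applying $d\rho_o$, namely $e^{2t}\theta_t/\sinh r_t$: the paper's comparison with $\hyp^2$ (angle of parallelism) gives the sharper estimate $\theta_t\le 4e^{-r_t}/\beta_t$, and then
\[
\frac{e^{2t}\,\theta_t}{\sinh r_t}\;\le\;\frac{8}{\beta_t}\,e^{2t-2r_t}\;\longrightarrow\;\frac{8}{\angle_o(\om,\xi)}\,e^{2b_\xi(x)},
\]
which is finite for each $x\in H$. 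So your conclusion stands, but the stated reason does not; you need the extra factor $1/\sinh r_t$ from $d\rho_o$ (and hence effectively the estimate $\theta_t=O(e^{-r_t})$) to close the bound.
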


\begin{proof} The composition
$f_s:H\to U_oM$
of the radial projection
$H\to H_{b,s}$
from
$\om$
with
$\rho_o$
is a smooth embedding for all sufficiently large
$s\in\R$
that approximates
$f$
as
$s\to\infty$.
We show that the differential
$d_xf_s$
is uniformly bounded in
$s$
for every
$x\in H$
and almost preserves the
$E(-1)$-subbundles
as
$s\to\infty$.

Given
$x\in H$
and
$v\in E_u(\la)\sub T_xH$, $u=\grad b(x)$,
there is a unique Jacobi vector field
$V$
along the geodesic
$\ga:\R\to M$,
$\ga(-\infty)=\om$, $\ga(0)=x$,
such that
$V(0)=v$
and
$V(s)\to 0$
as
$s\to -\infty$.
The direction field
$V/|V|$
is parallel along
$\ga$
and
$|V(s)|=e^{s\sqrt{|\la|}}|v|$.

For
$s\in\R$
we put
$\al_s=\angle_{\ga(s)}(o,\om)$,
$\be_s=\angle_o(\om,\ga(s))$,
$\tau_s=|o\ga(s)|$.
Then
\begin{equation}\label{eq:angle_par}
\al_s\le4e^{-\tau_s}/\be_s
\end{equation}
by comparison with
$\hyp^2$
and the (generalized) formula for the angle of
parallelism. Note that
$\be_s\to\angle_o(\om,\ga(\infty))\neq 0$
as
$s\to\infty$
for all
$x\in H$.
Furthemore,
$\lim_{s\to\infty}(\tau_s-s)=b_{\ga_x}(o)$,
where
$\ga_x=x\ga(\infty)$.
Thus
$\tau_s=s+b_{\ga_x}(o)+o(1)$
as
$s\to\infty$.

For
$w=df_s(v)\in T(U_oM)$
let
$W$
be the Jacobi field along
$o\ga(s)$
with initial data
$W(o)=0$, $\dot W(o)=w$.
Then, by definition of
$w$,
the vector
$W(\ga(s))$
is the orthogonal projection of
$V(s)$
to
$u'^\bot$,
where
$u'\in U_{\ga(s)}M$
is tangent to the segment
$o\ga(s)$.
The angle between
$V(s)$
and
$W(\ga(s))$
is at most
$\al_s$.
Thus
$(1-\al_s^2)|V(s)|\le|W(\ga(s))|\le|V(s)|$.

In the orthogonal decomposition
$W=W_1+W_2$
with respect to the eigenspaces of the curvature
operator, the sections
$W_i$
of
$E(-i^2)$,
$i=1,2$,
are Jacobi fields along
$o\ga(s)$.
Furthemore, we have
$|W_i(\ga(s))|\le|W(\ga(s))|\le|V(s)|$.
The eigenspaces of the curvature operator
$\cR(\cdot,u)u$
depend smoothly on the direction
$u$,
thus for
$i=\sqrt{|\la|}$
the angle between
$W(\ga(s))$
and
$W_i(\ga(s))$
is
$O(\al_s)$,
and therefore
$|W_i(\ga(s))|\ge(1-O(\al_s))|V(s)|$.

Since
$\dot W=\dot W_1+\dot W_2$,
we have
$|W_i(\ga(s))|=\frac{\sinh\left(i\tau_s\right)}{i}|w_i|$
for
$i=1,2$
and the orthogonal decomposition
$w=w_1+w_2$
with respect to the eigenspaces of the curvature operator. Thus for
$i=\sqrt{|\la|}$,
we have
$$(1-O(\al_s))i|V(s)|/\sinh\left(i\tau_s\right)
\le|w_i|\le i|V(s)|/\sinh\left(i\tau_s\right)$$
and therefore
$|w_i|\sim 2ie^{-ib_{\ga_x}(o)}|v|$
as
$s\to\infty$.
Note that
$b_{\ga_x}(o)\ge -\dist(o,H)$
and
$b_{\ga_x}(o)\to\infty$
as
$x\to\infty$
in
$H$.

If
$\la=-1$,
then
$|w_2|\le 2|V(s)|/\sinh2\tau_s\le 8e^{-s}e^{-2b_{\ga_x}(o)}|v|\to 0$
as
$s\to\infty$.
This shows that
$df_s$
almost preserves the subbundle
$E(-1)$
and is almost conformal on it with the factor
$2e^{-b_{\ga_x}(o)}$.

It remains to consider the case
$\la=-4$.
We already know that
$|w_2|\sim 4e^{-2b_{\ga_x}(o)}|v|$
as
$s\to\infty$.
On the other hand,
$|W_1(\ga(s))|=O(\al_s)|V(s)|$
and thus
$|w_1|=|W_1(\ga(s))|/\sinh\tau_s\le O(\al_s)e^{2s}|v|/\sinh\tau_s$.
Using (\ref{eq:angle_par}), we obtain
$|w_1|\le ce^{-2b_{\ga_x}(o)}|v|$
for some constant
$c>0$
depending only on
$x\in H$, $c=c(x)$.
Thus
$|d_xf_s(v)|\le|w_1|+|w_2|\le(c+4)e^{-2b_{\ga_x}(o)}|v|$
and hence the norm
$\|d_xf_s\|$
is uniformly bounded in
$s$
for every
$x\in H$.
We conclude that
$f=\lim_{s\to\infty} f_s$
is Lipschitz.
\end{proof}

\begin{rem}\label{rem:pansu} Lemma~\ref{lem:horo_to_sphere}
is a refinement of \cite[Lemme~9.6]{Pa}. We have added the
estimate of
$df_s$
on
$E(-4)$
based on the estimate (\ref{eq:angle_par}), which leads
to the conclusion that the embedding
$f:H\to U_oM$
is Lipschitz.
\end{rem}

\subsubsection{Isometries of $M$ and invariant metrics on a horosphere}
\label{subsubsect:iso_metric_horo}

Given
$\om\in\di M$,
there is a subgroup
$N_\om$
in the isometry group
$G=\iso M$
(a maximal unipotent subgroup) that leaves invariant
every horosphere
$H$
in
$M$
centered at
$\om$
and is simply transitive on
$H$
and
$\di M\sm\{\om\}$.
The group
$N=N_\om$
is a nilpotent Lie group of dimension
$\dim N=\dim M-1$.
In the case
$M=\C\hyp^2$,
$N$
is isomorphic to the classical Heisenberg group.

Fixing a base point
$o\in H$,
we identify
$N$
with the orbit
$H=N(o)$
and the tangent space
$T_oH$
with the Lie algebra
$\cN$
of
$N$.
Then
$N$
leaves invariant the subbundles
$E(\la)$
of
$TH$
and
$\cN=E_1\oplus E_2$,
where
$E_i=E_u(-i^2)$, $i=1,2$,
are the fibers of
$E(\la)$'s
at
$o$.
Moreover,
$E_2=[E_1,E_1]$
in
$\cN$,
in particular,
$\cN$
is the minimal subalgebra in
$\cN$
containing
$E_1$,
see e.g. \cite{Pa}.

The subbundle
$E(-4)\sub TH$
is intergrable and respective fibers
$F$
are intersections of
$\K$-lines
with
$H$,
where each
$\K$-line
in
$M$
is a totally geodesic subspace isometric to the
hyperbolic space
$\frac{1}{2}\hyp^{\dim\K}$
of constant curvature
$-4$,
see \cite[\S 20]{Mo}, \cite{Pa}. Therefore, every fiber
$F$
is flat in
$H$.
The corresponding fibration
$\phi:H\to B$
is a Riemannian submersion (with respect to the induced
Riemannian metric on
$H$)
with the base
$B$
isometric to
$\R^{(n-1)\dim\K}$.
The group
$N$
acts by isometries on
$B$,
the kernel of that action is the center
$Z$
of
$N$,
so that
$N/Z=\R^{(n-1)\dim\K}$,
and the submersion
$\phi$
is equivariant with respect to these actions of
$N$
on
$H$
and
$B$.

Identifying
$E_1$
with
$\K^{n-1}$
and
$E_2$
with
$\im K$,
the Lie algebra structure of
$\cN=E_1\oplus E_2$
is given by
$[v,w]=2\im(v,w)$
for
$v$, $w\in E_1$,
where
$v=(v_2,\dots,v_n)$
and
$(v,w)=\sum_i\ov v_iw_i$,
see \cite[(19.14)]{Mo}.

We call the subbundle
$E=E(-1)\sub TH$
the {\em polarization} on the horosphere
$H$.
A piecewise smooth curve
$\si:I\to H$
is said to be
$E$-{\em horizontal},
if
$\dot\si(t)\in E$
for every
$t\in I\sub\R$.
Its length is
$\ell(\si)=\int_I|\dot\si(t)|dt$.
We define the distance
$d_E(x,x')$
by taking the infimum of lengths of
$E$-horizontal
curves between
$x$, $x'\in H$.
This is finite because
$E_1$
generates
$\cN$
and thus
$E$
is completely non-intergrable. We call the restriction
$ds_E^2=ds^2|E$
of the Riemannian metric
$ds^2$
of
$M$
the {\em Carnot-Carath\'eodory} metric and respective distance
$d_E$
the {\em Carnot-Carath\'eodory} distance on
$H$.
Furthemore,
$d\phi$
is isometric on
$E$
and thus
$\phi$
preserves the lengths of
$E$-horizontal
curves.

Denote by
$d_H$
the interior distance on
$H$
induced from
$M$.

\begin{lem}\label{lem:basic} We have
$$\sup\frac{d_E^2(o,x)}{d_H(o,x)}\le 17,$$
where the supremum is taken over all
$x\in H$
with
$|ox|\le 1$
in
$M$.
\end{lem}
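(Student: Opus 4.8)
The plan is to compare the Carnot–Carathéodory distance $d_E(o,x)$ with the Riemannian distance $d_H(o,x)$ on the horosphere $H$ near the base point $o$, using the explicit nilpotent group structure $\cN = E_1\oplus E_2$ with bracket $[v,w]=2\im(v,w)$ described in Section~\ref{subsubsect:iso_metric_horo}. Since both $d_E$ and $d_H$ are left-invariant metrics on $N$ and agree infinitesimally on the horizontal subbundle $E=E(-1)$, one always has $d_H\le d_E$, so the content is the reverse inequality $d_E^2(o,x)\le 17\, d_H(o,x)$ for $|ox|\le 1$ in $M$. The strategy is: first reduce to a concrete estimate in exponential coordinates on $N$; then, given any point $x$ with small $d_H(o,x)$, explicitly construct a short $E$-horizontal curve from $o$ to $x$ whose length squared is controlled by $d_H(o,x)$.

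The key steps, in order. First, I would write $x=\exp(v+z)$ with $v\in E_1\cong\K^{n-1}$, $z\in E_2\cong\im\K$, and record the standard ball-box type estimate for the group $N$: a horizontal path can realize the $E_1$-displacement $v$ with length $\approx|v|$, and can realize a pure central displacement $z\in[E_1,E_1]$ by a horizontal loop in an $E_1$-plane enclosing the appropriate oriented area, at horizontal-length cost $\approx\sqrt{|z|}$. Concretely, using the bracket formula $[v,w]=2\im(v,w)$, a horizontal ``square'' of side $\ell$ in a suitable complex coordinate direction produces central displacement of size $\sim\ell^2$, so one gets $d_E(o,\exp(z))\le C_1\sqrt{|z|}$ with an explicit $C_1$. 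Concatenating, $d_E(o,x)\le C_2(|v|+\sqrt{|z|})$ with explicit $C_2$. Second, I would bound $|v|$ and $|z|$ in terms of $d_H(o,x)$: since $d\phi$ is isometric on $E$ and $\phi$ is a Riemannian submersion onto $B=\R^{(n-1)\dim\K}$ with kernel the center $Z$, the image $\phi(x)$ has $|\phi(x)|_B \le d_H(o,x)$ and $|\phi(x)|_B$ is comparable to $|v|$; for the central part, a Riemannian geodesic in $H$ of length $r=d_H(o,x)$ from $o$ sweeps out, under the submersion picture, a region whose enclosed area — hence $|z|$ — is $O(r^2)$ (this is where one uses that the fibers $F$ are flat $\frac12\hyp^{\dim\K}$-slices and the curvature is pinched $-4\le K\le -1$, so for $r\le 1$ the geometry is uniformly close to Euclidean). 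Third, combine: $d_E^2(o,x)\le 2C_2^2(|v|^2+|z|)\le 2C_2^2(d_H(o,x)^2 + O(d_H(o,x)^2)) \le \mathrm{const}\cdot d_H(o,x)$ when $d_H(o,x)\le 1$, and track the constants to see they can be arranged to give $17$.

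The main obstacle I expect is the honest bookkeeping of constants — establishing that the construction can be made efficient enough to land at $17$ rather than merely at some unspecified universal constant, and making sure the enclosed-area estimate for Riemannian geodesic loops on $H$ (as opposed to on the nilpotent group with its left-invariant metric) is controlled uniformly across all the spaces $\K\hyp^n$, $\K\in\{\C,\qH,\Ca\}$. The uniformity is plausible because for $|ox|\le 1$ the induced metric on $H$ differs from the nilpotent left-invariant metric by a factor $1+O(|ox|)$ with universal implied constant (pinched curvature plus the homogeneity of $H$ under $N$), but pinning down the numerical bound $17$ will require a careful but elementary computation with the ``square loop'' construction — choosing the side length optimally and estimating $|v|+\sqrt{|z|}$ against $r$ sharply. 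I would organize this as: (i) the group-theoretic upper bound $d_E(o,\exp(v+z))\le C(|v|+\sqrt{|z|})$; (ii) the comparison $d_H \ge (1-O(r))\,d_{N}$ for the left-invariant metric $d_N$ on small balls; (iii) the arithmetic to produce $17$.
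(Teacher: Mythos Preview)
There is a genuine gap. Your step (ii), the claim that the central component satisfies $|z|=O(r^2)$ with $r=d_H(o,x)$ via an ``enclosed area'' argument, is false. The enclosed-area picture controls the holonomy of a \emph{horizontal} curve, but a Riemannian geodesic in $H$ can move directly in the vertical direction. Concretely, take $x=\exp(z)$ with $z\in E_2$: the fiber $F$ is flat, so $d_H(o,x)=|z|$, yet your bound would force $|z|\le C|z|^2$, impossible for small $|z|$. In fact your conclusion as written, $d_E^2\le 2C_2^2\big(|v|^2+|z|\big)=O(d_H^2)$, would make $d_E$ and $d_H$ locally bilipschitz, contradicting the sub-Riemannian nature of $d_E$. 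The correct elementary bound is only $|z|=O(r)$, which still yields $d_E^2=O(d_H)$ but makes the constant-tracking toward $17$ much less transparent than you suggest.

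The paper organizes the argument differently and this is what produces the explicit constant. It first observes, using the dilations $h_\lambda$ (which scale $d_E$ by $\lambda$ and $d_H|_F$ by $\lambda^2$) together with the transitive action of the stabilizer $G_{o\omega}$ on rays in $F$, that the ratio $r(x)=d_E^2(o,x)/d_H(o,x)$ is \emph{constant} on the fiber $F\sm\{o\}$. That constant is then bounded by choosing orthonormal $v,w\in E_1$, noting the commutator $[\exp sv,\exp sw]\in F$ is reached by a four-edge $E$-horizontal broken geodesic of length $2s(|v|+|w|)$ while $d_H(o,x_s)=s^2|[v,w]|+o(s^2)$; with $|[v,w]|=2$ this gives $r\le 8$. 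For general $x$ with $|ox|\le 1$, one reaches the fiber through $x$ by a horizontal horocycle $\si$ lying in a totally real $\hyp^2$ (so $\ell(\si)\le 2\sinh\frac12$), bounds the remaining fiber piece by $r$ and Theorem~\ref{thm:horo_dist}, and assembles $r+2\sinh\frac12+2\sqrt{r\sinh 1}\le 17$. The scaling/homogeneity step is the idea you are missing; without it, even the corrected version of your scheme would leave the constant unspecified.
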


\begin{proof} Let
$F=\phi^{-1}(\phi(o))$
be the fiber of
$\phi$
through
$o$.
First, we show that the ratio
$r(x)=d_E^2(o,x)/d_H(o,x)$
is independent of
$x\in F$.

For every
$\la>0$
there is a homothety
$h_\la:N\to N$
that is an automorphism of
$N$
and that acts on
$F$
as the homothety with coefficient
$\la^2$
and on
$B$
as the homothety with coefficient
$\la$,
see e.g. \cite{Pa}. In particular,
$\ell(h_\la(\si))=\la\ell(\si)$
for the length
$\ell(\si)$
of every
$E$-horizontal
curve
$\si\sub H$.
It follows that
$r(h_\la(x))=r(x)$
for every
$\la>0$.
Furthemore, the stabilizer
$G_{o\om}\sub G$
of
$o\cup\om$
acts transitively on the rays
$o\xi\sub F$
(see \cite[\S~21]{Mo}). Thus
$r=r(x)$
is independent of
$x\in F\sm\{o\}$.

Given
$v$, $w\in E_1\sub\cN=T_oH$,
the commutator
$[\exp sv,\exp sw]=x_s$
lies in
$F=\exp E_2$
for every
$s\ge 0$
and
$[v,w]=\lim_{s\to 0}\frac{1}{s^2}ox_s$.
Representing the commutator as the end point of
the broken geodesic
$\si_s$
with four edges, we find that
$\si_s$
is an
$E$-horizontal
curve of length
$2s(|v|+|w|)$
that connects
$o$
and
$x_s$.
Thus
$d_E(o,x_s)\le 2s(|v|+|w|)$,
while
$d_H(o,x_s)=s^2|[v,w]|+o(s^2)$.
Hence
$r\le 4(|v|+|w|)^2/|[v,w]|$.
Taking
$v$, $w\in E_1=\K^{n-1}$
orthonormal,
$v=(1,0,\dots,0)$, $w=(j,0,\dots,0)$,
where
$j$
is one of
$\dim\K-1$
imaginary units,
we find
$[v,w]=2j$.
Thus
$|v|=|w|=1$, $|[v,w]|=2$
and
$r\le 8$.

In general case, given
$x\in N$
with
$|ox|\le 1$
in
$M$,
there is an
$E$-horizontal
curve
$\si$
that connects
$o$
and
$x'$
so that
$\phi\circ\si$
is a segment in
$B$
and
$x'x\sub F$,
where
$F$
is the fiber of
$\phi$
through
$x$.
We have
$|x'x|\le|ox|\le 1$, $d_H(o,x)\ge\ell(\si),d_H(x',x)$
and
$\si$
lies in a totally real plane in
$M$
which is a geodesic subspace isometric to
$\hyp^2$.
Moreover,
$\si$
is a horocycle in
$\hyp^2$,
thus
$\ell(\si)\le 2\sinh\frac{1}{2}$,
and also
$d_H(x',x)\le\sinh 1$,
see Theorem~\ref{thm:horo_dist} below. Then
$d_E(o,x)\le\ell(\si)+d_E(x',x)$
and thus
$$\frac{d_E^2(o,x)}{d_H(o,x)}\le\frac{d_E^2(x',x)}{d_H(x',x)}
+\ell(\si)+2d_E(x',x)\le r+2\sinh(1/2)+2\sqrt{r\sinh 1}\le 17.$$
\end{proof}

\subsection{Negatively pinched Hadamard manifolds}
\label{subsect:hadamard}

Let
$M$
be a Hadamard manifold with sectional curvatures
$-b^2\le K_\si\le -a^2$, $a>0$.
Assume that a horosphere
$H\sub M$
is fixed. We denote by
$|xx'|$
the distance between
$x$, $x'\in H$
in
$M$
and by
$|xx'|_H$
the induced interior distance in
$H$.
We shall use the following result from
\cite[Theorem~4.6]{HI}.

\begin{thm}\label{thm:horo_dist} For every
$x$, $x'\in H$
we have
$$\frac{2}{a}\sinh\frac{a}{2}|xx'|
\le|xx'|_H\le\frac{2}{b}\sinh\frac{b}{2}|xx'|.$$
\end{thm}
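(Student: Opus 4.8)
The plan is to prove both inequalities by sandwiching the metric of $M$, over the horoball bounded by $H$, between two warped--product comparison metrics, and reducing each bound to the elementary relation between horocyclic and ambient distance on a horocycle in the hyperbolic plane of curvature $-a^2$, respectively $-b^2$. To set up coordinates adapted to $H$, fix the center $\om\in\di M$ of $H$ and the Busemann function $\be$ with $H=\be^{-1}(0)$, and let $\varphi_s$ be the flow of $u=\grad\be$. Then $\varphi_s$ maps $H$ diffeomorphically onto the horosphere $\be^{-1}(s)$, the map $(y,s)\mapsto\varphi_s(y)$ is a diffeomorphism $H\times\R\to M$, and in these coordinates the metric of $M$ is $g=ds^2+h_s$, where $h_0$ is the induced metric of $H$, $\partial_s h_s=2\,\mathrm{II}_s$ with $\mathrm{II}_s$ the second fundamental form of $\be^{-1}(s)$, and the horoball $\{\be\le 0\}$ is the convex set $\{s\le 0\}$.

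The one external fact I would invoke is the Hessian comparison $a\,h_s\le\mathrm{II}_s\le b\,h_s$ for a Busemann function under the pinching $-b^2\le K\le -a^2$ (a standard consequence of the Riccati comparison theorem for the stable shape operator of horospheres). Integrating $\partial_s\log h_s\in[2a,2b]$ from $s\le 0$ up to $0$ gives $e^{2bs}h_0\le h_s\le e^{2as}h_0$ on the horoball, hence the pointwise inequalities of quadratic forms
$$\wh g_b\ \le\ g\ \le\ \wh g_a\quad\text{on }\{s\le 0\},\qquad\text{where}\quad\wh g_c:=ds^2+e^{2cs}h_0.$$
Each $\wh g_c$ is a warped product; if $\si$ is a complete $h_0$-geodesic through $x$ and $x'$ (we may assume $x\neq x'$), then $\si\times\R$ is totally geodesic in $(H\times\R,\wh g_c)$ with induced metric $ds^2+e^{2cs}\,dt^2$, where $t$ is an arclength parameter on $\si$; this is a copy of $\hyp^2$ of curvature $-c^2$ in which $\{s=0\}$ is a horocycle containing $(x,0)$ and $(x',0)$ at horocyclic distance $|xx'|_H$. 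Recall that in $\hyp^2(-c^2)$ two points of a horocycle at horocyclic distance $\ell$ are joined by a geodesic, contained in the horoball, of length $\tfrac2c\operatorname{arcsinh}(\tfrac c2\ell)$.

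For the lower bound, let $\bar\ga$ be the geodesic joining $(x,0)$ and $(x',0)$ inside the copy of $\hyp^2(-a^2)$ just described; it lies in $\{s\le 0\}$ and has $\wh g_a$-length $\tfrac2a\operatorname{arcsinh}(\tfrac a2|xx'|_H)$, so using $g\le\wh g_a$ there we get $|xx'|\le\operatorname{length}_g(\bar\ga)\le\tfrac2a\operatorname{arcsinh}(\tfrac a2|xx'|_H)$, which rearranges to $|xx'|_H\ge\tfrac2a\sinh\tfrac a2|xx'|$. For the upper bound, let $\eta$ be a shortest $\wh g_b$-geodesic from $(x,0)$ to $(x',0)$; comparing with the $g$-shortest path from $x$ to $x'$ (which lies in $\{s\le 0\}$ by convexity of the horoball) and using $\wh g_b\le g$ there gives $L_b:=d_{\wh g_b}\big((x,0),(x',0)\big)\le|xx'|$. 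Writing $\eta(u)=(y(u),s(u))$, $u\in[0,L_b]$, the warped--product geodesic equations give $\ddot s=b(1-\dot s^2)$ and $|\dot y|_{h_0}=e^{-bs}\sqrt{1-\dot s^2}$; the boundary conditions $s(0)=s(L_b)=0$ pin down $s$, and a direct integration yields $\int_0^{L_b}|\dot y|_{h_0}\,du=\tfrac2b\sinh\tfrac b2 L_b$. Since $y$ joins $x$ to $x'$ in $H$, this gives $|xx'|_H\le\tfrac2b\sinh\tfrac b2 L_b\le\tfrac2b\sinh\tfrac b2|xx'|$.

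The only step of real substance is the Hessian comparison $a\,h_s\le\mathrm{II}_s\le b\,h_s$; once it is granted, the argument is bookkeeping with the two comparison metrics together with the one--variable computation in $\hyp^2(-c^2)$. The minor points needing care are the ``stays in the horoball'' assertions for the auxiliary warped products, which hold because $e^{cs}$ is log--convex, so that the $s$-coordinate is convex along $\wh g_c$-geodesics.
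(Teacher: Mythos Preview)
The paper does not supply a proof of this statement; it is quoted from Heintze--Im~Hof \cite[Theorem~4.6]{HI} and used as an external input, so there is nothing in the paper itself to compare your argument against.

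Your argument is correct. The Riccati comparison $a\,\mathrm{Id}\le A_s\le b\,\mathrm{Id}$ for the shape operator of horospheres under the pinching $-b^2\le K\le -a^2$ is precisely the ingredient Heintze and Im~Hof establish, and once it is granted, the sandwich $\wh g_b\le g\le\wh g_a$ on the horoball and the reduction to the horocycle formula in $\hyp^2(-c^2)$ go through as you describe. Two minor remarks: the shorthand ``$\d_s\log h_s\in[2a,2b]$'' should be read vector by vector (i.e., for each fixed tangent vector $v$ one has $\d_s\log h_s(v,v)\in[2a,2b]$); and for the upper bound you do not actually need the totally geodesic surface $\si\times\R$, since the equation $\ddot s=b(1-\dot s^2)$ and the relation $|\dot y|_{h_0}=e^{-bs}\sqrt{1-\dot s^2}$ hold for \emph{any} unit-speed $\wh g_b$-geodesic in the full warped product, and together with the boundary data $s(0)=s(L_b)=0$ these already yield $\int_0^{L_b}|\dot y|_{h_0}\,du=\tfrac{2}{b}\sinh\tfrac{b}{2}L_b$.
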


\subsection{Gromov hyperbolic spaces}
\label{subsect:basic_hyperbolic}

Most of facts on hyperbolic spaces needed for the paper,
except for the equiradial points of infinite triangles,
see sect.~\ref{subsubsect:equirad}, the reader can find,
for example, in \cite{BS}. We discuss them for the symmetric rank one spaces
$M$,
though all of them hold true for general proper $\CAT(-1)$-spaces
$X$.
In this case the geodesic boundary at infinity
$\di X$
coincides with the Gromov boundary at infinity. Every
$\CAT(-1)$-space
is boundary continunous, see \cite[sect. 3.4.2]{BS}. This essentially
simplifies definitions of key notions comparing with general case of
Gromov hyperbolic spaces.

\subsubsection{Gromov products and $\de$-triples}

The {\em Gromov product} of
$x$, $x'\in M$
with respect to
$o\in M$
is
$(x|x')_o = 1/2(|xo|+|x'o|-|xx'|)$.
This is always nonnegative by triangle inequality.

The space
$M$
being a
$\CAT(-1)$-space
is
$\de$-hyperbolic (by Gromov),
that is, for every triangle
$xyz \sub M$
if
$y'\in xy$, $z'\in xz$
and
$|xy'| = |xz'|\le (y|z)_x$,
then
$|y'z'|\le\de$,
with the constant
$\de\le\de_{\hyp^2}$.
It is known that the hyperbolicity constant for the
real hyperbolic plane
$\hyp^2$
equals
$\de_{\hyp^2}=2\ln\tau=0.9624\dots$,
where
$\tau$
is the golden ratio,
$\tau^2=\tau+1$,
see \cite{BS}.

The condition above implies that for every
$o,x,y,z\in M$
the
$\de$-{\em inequality}
holds,
$(x|y)_o \geq \min\{(x|z)_o,(y|z)_o\} - \de$
(the converse is also true but with different
$\de$).
This inequality can be rewritten in terms of
$\de$-triples.
A triple of real numbers
$(a,b,c)$
is called a
$\de$-{\em triple},
if the two smallest of these numbers differ by at most
$\de$.
Then the
$\de$-inequality
is equivalent to that the numbers
$(x|y)_o$,
$(x|z)_o$
and
$(y|z)_o$
form a
$\de$-triple.

The Gromov product of
$x$, $y\in M$
with respect to a Busemann function
$b:M\to\R$
is defined by
$$(x|y)_b = \frac{1}{2}(b(x) + b(y) - |xy|).$$
This product, contrary to the standard case
$(x|y)_o$,
may take arbitrary real values. Busemann functions
$b$, $b'$,
centered at one and the same point
$\om\in\di M$,
differ by a constant,
$b-b'=\const$.
Thus the Gromov products with respect to
$b$, $b'$
differ by the same constant. As above, for every
$x$, $y$, $z\in M$
the numbers
$(x|y)_b$,
$(x|z)_b$
and
$(y|z)_b$
form a
$\de$-triple.

The Gromov product of points
$\xi$, $\eta\in\di M$
in the boundary at infinity with respect to
$o\in M$
is the limit
$$(\xi|\eta)_o=\lim_{s,t\to\infty}(\ga(s)|\rho(t))_o,$$
where
$\ga=o\xi$, $\rho=o\eta$
are geodesic rays (the limit exists by monotonicity of
the Gromov product). Note that
$(\xi|\xi)_o=\infty$
for every
$\xi\in\di M$.
Again, for every pairwise distinct points
$\xi$, $\eta$, $\zeta\in\di M$
the numbers
$(\xi|\eta)_o$,
$(\xi|\zeta)_o$
and
$(\eta|\zeta)_o$
form a
$\de$-triple.

Finally, the Gromov product of
$\xi$, $\eta\in\di M\sm\{\om\}$
with respect to a Busemann function
$b:M\to\R$
centered at
$\om\in\di M$
is the limit
$$(\xi|\eta)_b=\lim_{s,t\to\infty}(\ga(s)|\rho(t))_b,$$
where
$\ga=\om\xi$, $\rho=\om\eta$
are geodesics in
$M$
with
$\ga(\infty)=\xi$, $\rho(\infty)=\eta$
(the limit exists by monotonicity of the Gromov product).
For different Busemann functions
$b$, $b':M\to\R$
with one and the same center
$\om$
the Gromov products
$(\xi|\eta)_b$, $(\xi|\eta)_{b'}$
differ by a constant,
$(\xi|\eta)_b-(\xi|\eta)_{b'}=\const$,
for every
$\xi$, $\eta\in\di M\sm\{\om\}$.
For every pairwise distinct points
$\xi$, $\eta$, $\zeta\in\di M\sm\{\om\}$
the numbers
$(\xi|\eta)_b$,
$(\xi|\zeta)_b$
and
$(\eta|\zeta)_b$
form a
$\de$-triple.

\subsubsection{Equiradial points of a triangle}
\label{subsubsect:equirad}

For every triangle
$xyz\sub M$
there are three spheres centered at its vertices
which are pairwise tangent to each other from outside.
The tangent points lie on the sides of the triangle and are
called the {\em equiradial} points. The collection of the
equiradial points
$u\in yz$, $v\in xz$, $w \in xy$
is uniquely determined by the conditions
$|uz|=|vz|=(x|y)_z$, $|uy|=|wy|=(x|z)_y$, $|vx|=|wx|=(y|z)_x$.

If instead of spheres we take horospheres centered in vertices
at infinity, then we obtain the definition of equiradial points
of an infinite triangle in
$M$.
In this case, the existence of the equiradial points and their
connection with respective Gromov products is not completely
obvious.

\begin{pro}\label{pro:equirad_inf} For every pairwise distinct points
$\xi$, $\eta$, $\zeta\in\di M$
the infinite triangle
$\xi\eta\zeta$
in
$M$
possesses a unique collection of equiradial points
$u\in\eta\zeta$, $v\in\xi\zeta$, $w\in\xi\eta$.
Furthemore, we have
$(\eta|\zeta)_b=b(v)=b(w)$
for every
$b\in\cB(\xi)$,
$(\xi|\zeta)_{b'}=b'(u)=b'(w)$
for every
$b'\in\cB(\eta)$
and
$(\xi|\eta)_{b''}=b''(u)=b''(v)$
for every
$b''\in\cB(\zeta)$.
\end{pro}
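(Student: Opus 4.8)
The plan is to realise the equiradial collection of the infinite triangle as the set of tangency points of a canonical triple of horoballs, one centred at each vertex, and then to identify the Busemann levels of these horoballs with the corresponding Gromov products by passing to the limit along the sides. \emph{Setting up the horoballs.} Fix Busemann functions $b_1\in\cB(\xi)$, $b_2\in\cB(\eta)$, $b_3\in\cB(\zeta)$. A horosphere centred at $\xi$ is the same as a level set $\{b_1=t_1\}$; write $B_1=\{b_1\le t_1\}$, and similarly $B_2$, $B_3$. For $p\ne q$ in $\{\xi,\eta,\zeta\}$, every Busemann function centred at $p$ or at $q$ is affine along the geodesic $\gamma_{pq}$ joining them, with slope $+1$ in the direction away from its centre; hence $b_i+b_j$ is constant along $\gamma_{pq}$, say equal to $C_{ij}$. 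Since $M$ is a negatively pinched Hadamard manifold, $b_i+b_j$ attains its minimum on $M$ with minimum set exactly $\gamma_{pq}$ (a standard property of Busemann functions in strictly negative curvature), so $C_{ij}=\min_M(b_i+b_j)$. From this one reads off that $B_i$ and $B_j$ are disjoint when $t_i+t_j<C_{ij}$, overlap (already along $\gamma_{pq}$) when $t_i+t_j>C_{ij}$, and are tangent from outside — with $B_i\cap B_j$ the single point of $\gamma_{pq}$ at which $b_i=t_i$ (equivalently $b_j=t_j$) — exactly when $t_i+t_j=C_{ij}$. So an equiradial collection for $\xi\eta\zeta$ is precisely a simultaneous solution of $t_1+t_2=C_{12}$, $t_1+t_3=C_{13}$, $t_2+t_3=C_{23}$, a system whose matrix is nonsingular, with unique solution $t_1=\frac12(C_{12}+C_{13}-C_{23})$, $t_2=\frac12(C_{12}+C_{23}-C_{13})$, $t_3=\frac12(C_{13}+C_{23}-C_{12})$. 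This gives existence and uniqueness, with $w\in\partial B_1\cap\gamma_{\xi\eta}$, $v\in\partial B_1\cap\gamma_{\xi\zeta}$, $u\in\partial B_2\cap\gamma_{\eta\zeta}$ the three tangency points (well defined, since $b_1$, resp. $b_2$, is affine and nonconstant along that geodesic); note $b_1(w)=b_1(v)=t_1$, $b_2(w)=b_2(u)=t_2$, $b_3(u)=b_3(v)=t_3$.

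\emph{Identifying the levels with Gromov products.} By the incidences just recorded it suffices to prove $t_1=(\eta|\zeta)_{b_1}$, the two cyclic variants following by symmetry. Let $\rho_\eta,\rho_\zeta\colon[0,\infty)\to M$ be the unit-speed sub-rays of $\gamma_{\eta\zeta}$ issuing from the common point $u$ toward $\eta$, resp. toward $\zeta$; since they run along a single geodesic on opposite sides of $u$, we have $|\rho_\eta(s)\,\rho_\zeta(t)|=s+t$ for all $s,t\ge0$. As $M$ is boundary continuous, the limit defining $(\eta|\zeta)_{b_1}$ may be computed along the sequences $\rho_\eta(s)\to\eta$ and $\rho_\zeta(t)\to\zeta$, so
$$(\eta|\zeta)_{b_1}=\lim_{s,t\to\infty}\frac12\bigl(b_1(\rho_\eta(s))+b_1(\rho_\zeta(t))-s-t\bigr);$$
thus it is enough to show $b_1(\rho_\eta(s))-s\to b_1(w)$ and $b_1(\rho_\zeta(t))-t\to b_1(v)$, for then the displayed limit equals $\frac12(b_1(w)+b_1(v))=t_1$.

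\emph{The limit along the rays.} Let $\mu\colon[0,\infty)\to M$ be the unit-speed sub-ray of $\gamma_{\xi\eta}$ from $w$ toward $\eta$. The rays $\rho_\eta$ and $\mu$ are both asymptotic to $\eta$, and $b_2$ decreases at unit rate along each; since $b_2(u)=b_2(w)=t_2$, the points $\rho_\eta(s)$ and $\mu(s)$ lie on the common horosphere $\{b_2=t_2-s\}$ for every $s$. Asymptotic rays in a $\CAT(-1)$ space converge, so $\dist(\rho_\eta(s),\mu([0,\infty)))\to0$; if $\mu(s')$ realises this distance, then $|s-s'|=|b_2(\rho_\eta(s))-b_2(\mu(s'))|\le|\rho_\eta(s)\,\mu(s')|\to0$, whence $|\rho_\eta(s)\,\mu(s)|\le|\rho_\eta(s)\,\mu(s')|+|s-s'|\to0$. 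Because $b_1$ is $1$-Lipschitz, $b_1(\rho_\eta(s))-b_1(\mu(s))\to0$; and $b_1$ is affine of slope $+1$ along $\gamma_{\xi\eta}$ in the direction of $\eta$, so $b_1(\mu(s))=b_1(w)+s$, giving $b_1(\rho_\eta(s))-s\to b_1(w)$. The same argument, applied with the horosphere $\partial B_3=\{b_3=t_3\}$ centred at $\zeta$ and the sub-ray $\nu$ of $\gamma_{\xi\zeta}$ from $v$ toward $\zeta$, gives $b_1(\rho_\zeta(t))-t\to b_1(v)$, and the proof is complete.

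\emph{The main obstacle.} The substantive point is the structural input used above: that $b_i+b_j$ attains its minimum with minimum set precisely the geodesic $\gamma_{pq}$. This is what makes external tangency of the horoballs equivalent to $t_i+t_j=C_{ij}$ and forces each tangency point onto the corresponding side, so that the equiradial collection of an \emph{infinite} triangle is well defined at all; it is exactly here that strict negativity of the curvature (rather than mere nonpositivity) is needed, and everything after it is formal.
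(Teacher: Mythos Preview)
Your proof is correct and close in spirit to the paper's, but the organisation is genuinely different. The paper proceeds sequentially: it first \emph{defines} $v\in\xi\zeta$, $w\in\xi\eta$ by the level $(\eta|\zeta)_b$ of a chosen $b\in\cB(\xi)$, then picks $b'\in\cB(\zeta)$ with $b'(v)=0$, gets $u$, then $b''\in\cB(\eta)$ with $b''(u)=0$, gets $w'$, and finally shows $w'=w$ by recomputing $(\eta|\zeta)_b$ along the rays $w'\eta\subset\xi\eta$ and $v\zeta\subset\xi\zeta$; the ``closing up'' of this cycle is what gives existence, and uniqueness follows because the first step already pins down $v,w$. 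You instead write the pairwise external-tangency conditions as the linear system $t_i+t_j=C_{ij}$ and solve it in one stroke, then verify $t_1=(\eta|\zeta)_{b_1}$ by a limit along the two halves of $\eta\zeta$ from $u$.

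Both arguments rest on the same analytic core---exponential convergence of asymptotic rays in a $\CAT(-1)$ space, used to control $b_1(\rho_\eta(s))-s$ (your version) or $|\eta_\xi(t)\zeta_\xi(t)|-2t$ (the paper's version). What your route buys is symmetry in the three vertices and an explicit isolation of the one nontrivial geometric input, namely that $\min_M(b_i+b_j)$ is attained exactly on $\gamma_{pq}$; the paper asserts the horospheres ``touch each other from outside'' without spelling this out. What the paper's route buys is that it starts from the Gromov product, so the identity $(\eta|\zeta)_b=b(v)=b(w)$ is built in from the outset rather than verified afterwards. Either way the content is the same.
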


\begin{proof} There are uniquely determined points
$v\in\xi\zeta$, $w\in\xi\eta$,
such that
$(\eta|\zeta)_b=b(v)=b(w)$
for every
$b\in\cB(\xi)$.
We take the function
$b$
so that
$(\eta|\zeta)_b=0$.
We fix a function
$b'\in\cB(\zeta)$
such that
$b'(v)=0$
and let
$u=\eta\zeta\cap b'^{-1}(0)$.
We fix a function
$b''\in\cB(\eta)$
such that
$b''(u)=0$
and let
$w'=\xi\eta\cap b''^{-1}(0)$.
We show that
$b(w')=0$,
i.e.,
$w'=w$.

Denote by
$\eta(t)\zeta(t)\sub\eta\zeta$
the segment of length
$2t$
centered at
$u$.
Let
$\eta_\xi:[0,\infty)\to\xi\eta$
be the natural parametrization of the ray
$w'\eta$, $\eta_\xi(0)=w'$, $\eta_\xi(\infty)=\eta$,
$\zeta_\xi:[0,\infty)\to\xi\zeta$
the natural parametrization of the ray
$v\zeta$, $\zeta_\xi(0)=v$, $\zeta_\xi(\infty)=\zeta$.
Since
$b'(u)=b'(v)=0=b(v)$
and
$b''(w')=b''(u)=0$,
we have
$b(\zeta_\xi(t))=t$, $b(\eta_\xi(t))=b(w')+t$
and
$||\eta_\xi(t)\zeta_\xi(t)|-|\eta(t)\zeta(t)||=o(1)$
as
$t\to\infty$,
and we obtain

\begin{eqnarray*}
0=(\eta|\zeta)_b=\lim_{t\to\infty}(\eta_\xi(t)|\zeta_\xi(t))_b
&=&\frac{1}{2}\lim_{t\to\infty}(b(\eta_\xi(t))+b(\zeta_\xi(t))
-|\eta_\xi(t)\zeta_\xi(t)|)\\
&=&\frac{1}{2}\lim_{t\to\infty}(b(\eta_\xi(t))-t+b(\zeta_\xi(t))-t+o(1))\\
&=&b(w')/2.
\end{eqnarray*}

This means that the horospheres
$b^{-1}(0)$, $b'^{-1}(0)$, $b''^{-1}(0)$
centered at
$\xi$, $\eta$, $\zeta$
respectively touch each other from outside, i.e., the points
$u$, $v$, $w$
are equiradial for the infinite triangle
$\xi\eta\zeta$.

Since
$(\eta|\zeta)_b=b(v)=b(w)$
and the equiradial points are determined uniquely,
the equalities
$(\xi|\eta)_{b'}=b'(u)=b'(v)$,
$(\xi|\zeta)_{b''}=b''(u)=b''(w)$
also hold.
\end{proof}

\begin{rem}\label{rem:equirad_fin} The equiradial points of
infinite triangles in
$M$
with one or two vertices in
$M$
also exist and uniquely determined, and for them
the respective equalities for the Gromov products
also hold. This can be proved by similar arguments.
\end{rem}

\begin{lem}\label{lem:equirad_dist} Let
$u\in\eta\zeta$, $v\in\xi\zeta$, $w\in\xi\eta$
be the equiradial points of an infinite triangle in
$M$
with pairwise distinct vertices
$\xi$, $\eta$, $\zeta\in\di M$.
Then
$|uv|$, $|uw|$, $|vw|\le\de$,
where
$\de>0$
is the hyperbolicity constant for
$M$.
\end{lem}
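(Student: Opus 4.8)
The plan is to reduce the assertion to the $\de$-hyperbolicity of $M$ exactly in the form stated in Section~\ref{subsubsect:basic_hyperbolic}, using the characterization of the equiradial points of an infinite triangle via Busemann functions established in Proposition~\ref{pro:equirad_inf}. Fix $b\in\cB(\xi)$ normalized so that $(\eta|\zeta)_b=0$; by Proposition~\ref{pro:equirad_inf} this means $b(v)=b(w)=0$. The idea is to approximate the infinite triangle $\xi\eta\zeta$ by a finite triangle with one vertex $x$ far out on the ray from $w$ (equivalently from $v$) towards $\xi$, and with the other two vertices $\eta$, $\zeta$ pushed to finite points $y\in w\eta$, $z\in v\zeta$, and then apply the $\de$-inequality / thin-triangles condition to that finite triangle, letting the vertices escape to infinity.

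Concretely, I would argue as follows for the bound $|vw|\le\de$. Let $\ga=\xi\eta$ and $\rho=\xi\zeta$, parametrized by arclength from $w=\ga(0)$ and $v=\rho(0)$ respectively, with $\ga(-\infty)=\rho(-\infty)$ corresponding to $\xi$ (after reparametrizing the $b$-level so that $b\circ\ga(t)=b\circ\rho(t)=t$, which is possible since $b(v)=b(w)=0$). For large $T$ put $x=\ga(-T)=\rho(-T')$; since $b(\ga(-T))=-T$ and $b(\rho(-T'))=-T'$ and both rays converge to $\xi$, we have $T-T'\to 0$, and moreover $(\ga(-T)|\rho(-T))_x\to\infty$ because both geodesic rays share the endpoint $\xi$. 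Now consider, for large $s$, the finite triangle with vertices $x$, $y=\ga(s)$, $z=\rho(s)$. From $b\circ\ga(s)=b\circ\rho(s)=s$ and $(\eta|\zeta)_b=0$ one gets $|yz|=2s+o(1)$ as $s\to\infty$, hence $(y|z)_x=\tfrac12(|xy|+|xz|-|yz|)=\tfrac12((s+T)+(s+T')-2s)+o(1)\to T$. Since $|xw|=T$ and $|xv|=T'=T+o(1)$, both $w$ and $v$ lie on the respective sides at distance essentially $(y|z)_x$ from $x$; the $\de$-thinness condition of Section~\ref{subsubsect:basic_hyperbolic} (with $y'=w$ on $xy$, $z'=v$ on $xz$, noting $|xw|=|xv|+o(1)\le(y|z)_x+o(1)$) then gives $|vw|\le\de+o(1)$, and letting $s\to\infty$ and then $T\to\infty$ yields $|vw|\le\de$. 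The bounds $|uv|\le\de$ and $|uw|\le\de$ follow by the same argument after permuting the roles of $\xi$, $\eta$, $\zeta$ and invoking the corresponding normalization from Proposition~\ref{pro:equirad_inf}.

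The main technical point to handle carefully is the error control in the limits: one must verify that $|yz|=2s+o(1)$ (this is precisely the statement that $(\eta|\zeta)_b\to 0$ along $\ga(s),\rho(s)$, which holds by definition of the boundary Gromov product and the normalization), and that $T-T'\to 0$ as the rays $\ga,\rho$ approach $\xi$ along the $b$-levels (standard boundary-continuity of $M$, available since $M$ is $\CAT(-1)$). The comparison inequality $|xw|\le(y|z)_x$ holds only up to an $o(1)$ term, so one should either absorb this into $\de$ by a limiting argument or, more cleanly, note that the $\de$-thin-triangle condition is stable under $o(1)$ perturbations of the hypothesis $|xy'|=|xz'|\le(y|z)_x$ since the conclusion $|y'z'|\le\de$ is closed. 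I expect this bookkeeping with the simultaneous limits $s\to\infty$, $T\to\infty$ to be the only real obstacle; the geometric content is entirely contained in Proposition~\ref{pro:equirad_inf} together with the definition of $\de$-hyperbolicity.
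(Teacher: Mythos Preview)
Your overall strategy---approximate the ideal triangle by finite triangles and invoke the $\de$-thin-triangle condition---is exactly the paper's. However, your construction contains a genuine error: you write ``put $x=\ga(-T)=\rho(-T')$'', but $\ga=\xi\eta$ and $\rho=\xi\zeta$ are distinct geodesics (since $\eta\neq\zeta$), so no finite point lies on both. If you set $x=\ga(-T)$, then $w\in xy$ is fine (both $x$ and $y$ lie on $\ga$), but the side $xz$ of your finite triangle is \emph{not} a subsegment of $\rho$, and hence $v$ does not lie on $xz$. To apply $\de$-thinness with $z'=v$ you would still need to show that the point on $xz$ at distance $(y|z)_x$ from $x$ converges to $v$ as $T,s\to\infty$; this requires the convergence $xz\to\xi\zeta$ of geodesics, which you have not mentioned and which is not covered by the $o(1)$ bookkeeping you describe.

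The paper avoids this asymmetry by a symmetric choice: it places one finite vertex on each of the three rays $v\xi\subset\xi\zeta$, $w\eta\subset\xi\eta$, $u\zeta\subset\eta\zeta$, at common distance $t$ from $v,w,u$. Then all three pairwise distances are $2t+o(1)$, all three Gromov products at the finite vertices equal $t+o(1)$, and the equiradial points of the finite triangle converge to $u,v,w$ as $t\to\infty$, giving all three inequalities at once. Your one-vertex-at-a-time approach can be repaired along the lines above, but as written the assertion $x=\ga(-T)=\rho(-T')$ is false and the placement of $v$ on the side $xz$ is unjustified.
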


\begin{proof} The equiradial points of every finite
triangle in
$M$
are at the distance at most
$\de$
from each other
by definition of
$\de$-hyperbolicity.
Thus it suffices to find a family of finite
triangles in
$M$
the collections of equiradial points of which approximate
the collection
$u$, $v$, $w$.

For
$t>0$
consider
$x\in v\xi$, $y\in w\eta$, $z\in u\zeta$
at the distance
$t$
from the vertices of the rays,
$|xv|=|yw|=|zu|=t$.
As
$t\to\infty$
we have
$wx\to w\xi$, $uy\to u\eta$, $vz\to v\zeta$,
and each of distances
$|wx|$, $|uy|$, $|vz|$
differs from
$t$
by
$o(1)$.
Thus each of distances
$|yx|$, $|zy|$, $|xz|$
differs from
$2t$
by
$o(1)$.
Hence, the Gromov products
$(y|z)_x$, $(x|z)_y$, $(x|y)_z$
all are equal to
$t$
up to
$o(1)$.
This immediately implies that the equiradial points of
$xyz$
approximate the points
$u$, $v$, $w$
as
$t\to\infty$.
\end{proof}

\begin{lem}\label{lem:equirad_below} Let
$u\in\eta\zeta$, $v\in\xi\zeta$, $w\in\xi\eta$
be the equiradial points of an infinite triangle
$\xi\eta\zeta\sub M$
with pairwise distinct vertices
$\xi$, $\eta$, $\zeta\in\di M$.
Then for the points
$w'\in w\eta$, $v'\in v\zeta$
with
$|w'w|=1+\de=|v'v|$,
where
$\de$
is the hyperbolicity constant of
$M$,
we have
$|v'w'|\ge 2$.
\end{lem}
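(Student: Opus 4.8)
The plan is to exploit that $L:=\eta\zeta$ is a geodesic line passing through $u$, so that $w'$ and $v'$ lie on rays asymptotic, respectively, to the two rays into which $u$ cuts $L$; the bound $|v'w'|\ge 2$ then drops out of a single triangle inequality stretched along $L$. Concretely, I would write $L$ as the union of the unit-speed rays $\ga_\eta,\ga_\zeta\colon[0,\infty)\to M$ with $\ga_\eta(0)=\ga_\zeta(0)=u$, $\ga_\eta(\infty)=\eta$ and $\ga_\zeta(\infty)=\zeta$; since $u\in L$ this gives $|\ga_\eta(t)\ga_\zeta(t)|=2t$ for every $t\ge 0$. I would parametrize the rays $w\eta\sub\xi\eta$ and $v\zeta\sub\xi\zeta$ by arclength as $\si$ and $\tau$ with $\si(0)=w$, $\tau(0)=v$, so that $w'=\si(1+\de)$ and $v'=\tau(1+\de)$.

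The first real step is to estimate how far $\si$ drifts from $\ga_\eta$ (and $\tau$ from $\ga_\zeta$). Both $\si$ and $\ga_\eta$ represent $\eta$, hence are asymptotic; since $M$ is $\CAT(0)$, the function $t\mapsto|\si(t)\ga_\eta(t)|$ is convex on $[0,\infty)$ and bounded, therefore non-increasing, so $|\si(t)\ga_\eta(t)|\le|\si(0)\ga_\eta(0)|=|wu|$ for all $t\ge 0$. By Lemma~\ref{lem:equirad_dist} the right-hand side is at most $\de$, and symmetrically $|\tau(t)\ga_\zeta(t)|\le|vu|\le\de$.

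The remaining step is bookkeeping. For any $t\ge 1+\de$,
$$|w'\ga_\eta(t)|\le|\si(1+\de)\si(t)|+|\si(t)\ga_\eta(t)|\le(t-1-\de)+\de=t-1,$$
and likewise $|v'\ga_\zeta(t)|\le t-1$; plugging these into $2t=|\ga_\eta(t)\ga_\zeta(t)|\le|\ga_\eta(t)w'|+|w'v'|+|v'\ga_\zeta(t)|$ yields $|v'w'|\ge 2t-2(t-1)=2$.

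The only genuinely non-formal ingredient is the monotonicity of $t\mapsto|\si(t)\ga_\eta(t)|$ for asymptotic rays, which is a standard consequence of the convexity of the $\CAT(0)$ metric; everything else is two applications of the triangle inequality, and the $t$'s cancel so the bound comes out exactly as $2$. It is worth pointing out that the argument is tight in the sense that it uses all the available slack: it is precisely the extra $\de$ in the radius $1+\de$ (rather than $1$) defining $w'$ and $v'$ that compensates the displacements $|wu|,|vu|\le\de$ of the equiradial points off the line $\eta\zeta$, so no smaller radius would make this proof go through.
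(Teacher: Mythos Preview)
Your proof is correct and is essentially the paper's argument: the paper's auxiliary points $w''\in u\eta$, $v''\in u\zeta$ with $|w''u|=|v''u|=1+\de$ are precisely your $\ga_\eta(1+\de)$, $\ga_\zeta(1+\de)$, and the paper's claim that $|v'v''|,|w'w''|\le\de$ ``by Lemma~\ref{lem:equirad_dist}'' tacitly uses the same $\CAT(0)$ monotonicity of asymptotic rays you made explicit, after which $|v'w'|\ge|v''w''|-2\de=2(1+\de)-2\de=2$ is your triangle inequality at $t=1+\de$. Your introduction of a general $t\ge 1+\de$ is harmless but unnecessary, since everything cancels the same way.
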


\begin{proof} We take
$w''\in u\eta$, $v''\in u\zeta$
so that
$|w''u|=1+\de=|v''u|$.
Then it follows from Lemma~\ref{lem:equirad_dist} that
$|v'v''|\le\de$
and
$|w'w''|\le\de$.
Thus
$|v'w'|\ge|v''w''|-2\de=2$.
\end{proof}

\section{Proof of Theorem~\ref{thm:main1}}
\label{sect:proof1}

We denote by
$\de$
the hyperbolicity constant of
$M$, $\frac{1}{2}\de_{\hyp^2}\le\de\le\de_{\hyp^2}<1$
by comparison with
$\hyp^2$, $\frac{1}{2}\hyp^2$.
We fix
$o\in M$, $\om\in\di M$
and the Busemann function
$b\in\cB(\om)$
with
$b(o)=0$.
For
$t\in\R$
we denote by
$H_{b,t}=b^{-1}(t)$
the horosphere in
$M$, $H=H_{b,0}$, $ds_{b,t}^2$
the Riemannian metric on
$H_{b,t}$
induced from
$M$.
We identify
$H_{b,t}$, $\di M\sm\{\om\}$
with
$H$
via the radial projection from
$\om$
and consider
$ds_{b,t}^2$
as the Riemannian metric on
$H$
for all
$t\in\R$.
Then the limit
$$ds_b=\lim_{t\to\infty}e^{-t}ds_{b,t}$$
exists and coincides with the Carnot-Carath\'eodory metric
$ds_E$.

This is because for every
$x\in H$, $v\in E_u(\la)\sub T_xH$,
where
$u=\grad b(x)$,
the unique Jacobi field
$V$
along the geodesic
$\exp_xtu$, $t\in\R$
with
$V(t)\to 0$
as
$t\to\infty$
has the parallel direction field
$V/|V|$
and
$|V(t)|=e^{t\sqrt{|\la|}}|v|$.
Thus
$e^{-t}ds_{b,t}(v)=e^{(\sqrt{|\la|}-1)t}|v|$,
and for
$v\in E=E(-1)$
we have
$e^{-t}ds_{b,t}(v)=|v|=ds_b(v)$
for all
$t\in\R$,
while
$ds_b(v)=\infty$
for all nonzero
$v\in E(-4)$.

We consider the respective Carnot-Carath\'eodory distance
$d_b$
as a metric on
$\di M\sm\{\om\}$
called {\em horospherical.}

\begin{lem}\label{lem:below} For each
$\xi$, $\eta\in\di M\sm\{\om\}$,
we have
$$d_b(\xi,\eta)\ge c_1e^{-(\xi|\eta)_b},$$
where
$c_1=2e^{-(1+\de)}$.
\end{lem}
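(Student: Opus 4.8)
The inequality to establish is a lower bound $d_b(\xi,\eta)\ge 2e^{-(1+\de)}e^{-(\xi|\eta)_b}$, so I need to bound the Carnot--Carath\'eodory distance on the horosphere $H$ from below in terms of the Gromov product based at the Busemann function $b$. The natural strategy is to locate the equiradial points of the infinite triangle $\om\xi\eta$ and exploit the scaling behaviour of $d_b=d_E$. By Remark~\ref{rem:equirad_fin} (equiradial points for infinite triangles with a vertex at $\om\in\di M$), there are equiradial points with, in particular, a point $p\in\xi\eta$ and points on the rays $\om\xi$, $\om\eta$ lying on a common horosphere $b^{-1}(s)$ with $s=(\xi|\eta)_b$ — this is the content of Proposition~\ref{pro:equirad_inf} adapted to the case at hand. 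Equivalently, the geodesics $\om\xi$ and $\om\eta$, viewed as vertical lines through $H$, together with the horosphere $H_{b,s}$ form the picture where $\xi,\eta\in H_{b,s}$ (after radial identification) are "close together at level $s$" in the sense that the Gromov product $(\xi|\eta)_b$ equals the level $s$ at which the tangent horosphere sits.

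\textbf{Key steps, in order.} First, I would make precise the relation $(\xi|\eta)_b = s$ where $s$ is the $b$-level of the equiradial tangent point configuration, using Proposition~\ref{pro:equirad_inf} / Remark~\ref{rem:equirad_fin}. Then I would apply Lemma~\ref{lem:equirad_below}: moving distance $1+\de$ up the rays $\om\xi$ and $\om\eta$ from the equiradial points (which sit at $b$-level roughly $s$, hence at $b$-level roughly $s-(1+\de)$ after moving toward $\om$, or $s+(1+\de)$ moving away — I must track the sign carefully) produces points $w', v'$ at mutual distance $\ge 2$ in $M$. Third — and this is the crucial scaling input — since $ds_b = ds_E$ and the homothety $h_\la$ of Section~\ref{subsubsect:iso_metric_horo} rescales $E$-horizontal lengths by $\la$ while shifting $b$-levels additively by $\log\la$, the $E$-distance on the horosphere $H_{b,t}$ between two boundary points, measured via the identification with $H$, scales like $e^{-t}$ relative to their "separation at their natural level". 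Concretely, if two points on a horosphere at $b$-level $t$ are at $M$-distance $\ge 2$, then their $d_E$-distance (in the normalized metric $ds_b$ on $H$, i.e. $d_b(\xi,\eta)$) is $\ge 2e^{-t}$: this is because $ds_b = \lim_{t\to\infty} e^{-t}ds_{b,t}$ and $d_E \ge d_H \ge d_M$ on any fixed horosphere, combined with the additive/multiplicative behaviour under $h_\la$. Putting $t = s + (1+\de) = (\xi|\eta)_b + (1+\de)$ gives $d_b(\xi,\eta) \ge 2 e^{-((\xi|\eta)_b + 1 + \de)} = 2e^{-(1+\de)}e^{-(\xi|\eta)_b}$, which is exactly the claimed bound with $c_1 = 2e^{-(1+\de)}$.

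\textbf{Main obstacle.} The delicate point is step three: making rigorous the claim that $d_E$-distance between the radial projections of two points scales precisely by $e^{-t}$ as one slides the reference horosphere to $b$-level $t$, and in particular that the lower bound "$M$-distance $\ge 2$ at level $t$" transfers to "$d_b \ge 2e^{-t}$" \emph{without loss of constant}. One must be careful that $d_b$ is defined using the \emph{limiting} metric $ds_b$ on the fixed horosphere $H$, not the induced metric on $H_{b,t}$; the bridge is that $E$-horizontal curves have length under $ds_{b,t}$ exactly $e^{t}$ times their length under $ds_b = ds_E$ (from $|V(t)| = e^{t}|v|$ for $v \in E(-1)$, as computed in the preamble of Section~\ref{sect:proof1}), so an $E$-horizontal curve in $H$ joining $\xi,\eta$ of $ds_E$-length $L$ becomes, pushed to $H_{b,t}$, a curve of induced length $e^t L$ whose endpoints are $w',v'$ at $M$-distance $\ge 2$; hence $e^t L \ge 2$, i.e. $L \ge 2e^{-t}$, giving $d_b(\xi,\eta) = d_E(\xi,\eta) \ge 2e^{-t}$. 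The only remaining care is the bookkeeping of which level $t$ the points $w',v'$ from Lemma~\ref{lem:equirad_below} actually lie on — they need not lie on a common horosphere, but one can replace them by their radial projections to the horosphere at the lower of their two $b$-levels, which only decreases $M$-distance by a controlled amount, or more cleanly, observe that $|v'w'|\ge 2$ together with both points having $b$-value $\le s+(1+\de)$ forces the radial projection argument through at level $t = s+(1+\de)$.
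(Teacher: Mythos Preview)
Your proposal is correct and follows essentially the same route as the paper's proof: locate the equiradial points of the infinite triangle $\om\xi\eta$ on the horosphere $b^{-1}(t_0)$ with $t_0=(\xi|\eta)_b$, move $1+\de$ further along the rays $\om\xi$, $\om\eta$ to get points at $M$-distance $\ge 2$ via Lemma~\ref{lem:equirad_below}, and use the exact scaling $\ell_{b,t}=e^t\ell_b$ of $E$-horizontal curves (from the Jacobi field identity $|V(t)|=e^t|v|$ for $v\in E(-1)$) to conclude that any near-minimizing $E$-horizontal curve has $d_b$-length $\ge 2e^{-(t_0+1+\de)}$. Your closing worry is unnecessary: since the equiradial points $u\in\om\eta$, $v\in\om\xi$ already lie on the common horosphere $b^{-1}(t_0)$ and both rays emanate from $\om$, moving distance $1+\de$ along each (away from $\om$) lands exactly on $b^{-1}(t_0+1+\de)$, so the points $u_t$, $v_t$ automatically share a horosphere and no adjustment is required.
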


\begin{proof} We assume that
$\xi\neq\eta$,
since otherwise there is nothing to prove. For
$\ep>0$
we take an
$E$-horizontal
curve
$\si_\infty\sub\di M\sm\{\om\}$
between
$\xi$
and
$\eta$
with length
$\ell_b(\si_\infty)\le d_b(\xi,\eta) + \ep$.
Let
$\si_t\sub H_{b,t}$
be its radial projection from
$\om$.
The curve
$\si_t$
connects the points
$u_t\in\om\eta$, $v_t\in\om\xi$
with
$b(u_t)=b(v_t)=t$
and also is
$E$-horizontal
on
$H_{b,t}$,
thus its length
$\ell_{b,t}(\si_t)=e^t\ell_b(\si_\infty)$.

Let
$u\in\om\eta$, $v\in\om\xi$
be the equiradial points of the triangle
$\om\xi\eta$.
Then
$t_0:=(\xi|\eta)_b=b(u)=b(v)$.
We put
$t=t_0+1+\de$.
For
$u_t\in\om\eta$, $v_t\in\om\xi$
we have
$|u_tv_t|\ge 2$
by Lemma~\ref{lem:equirad_below}. Thus
$\ell_{b,t}(\si_t)\ge|u_tv_t|\ge 2$
and
$$d_b(\xi,\eta)+\ep\ge\ell_b(\si_\infty)=e^{-t}\ell_{b,t}(\si_t)
\ge c_1e^{-(\xi|\eta)_b}.$$
Since
$\ep$
is taken arbitrary, we obtain the required estimate.
\end{proof}

\begin{proof}[Proof of Theorem~\ref{thm:main1}] In view of
Lemma~\ref{lem:below}, it remains to estimate the horospherical
distance
$d_b(\xi,\eta)$
from above. We assume that the points
$\xi$, $\eta\in\di M\sm\{\om\}$
are distinct. Let
$u\in\om\eta$, $v\in\om\xi$
be the equiradial points of the triangle
$\om\xi\eta$, $t=(\xi|\eta)_b=b(u)=b(v)$.
Then
$|uv|\le\de<1$
by Lemma~\ref{lem:equirad_dist}. Thus by Lemma~\ref{lem:basic},
we have
$d_E^2(u,v)\le 17d_{b,t}(u,v)$,
where
$d_{b,t}$
is the interior distance on
$H_{b,t}$
induced from
$M$.
By Theorem~\ref{thm:horo_dist}, we have
$d_{b,t}(u,v)\le\sinh|uv|\le\sinh\de$.
Thus
$d_E^2(u,v)\le 17\sinh\de=:c_2^2$.
To avoid a standard
$\ep$-argument,
we assume for simplicity that there is an
$E$-horizontal
curve
$\si_t\sub H_{b,t}$
between
$u$, $v$
with
$\ell_{b,t}(\si_t)=d_E(u,v)$.
The curve
$\si_t$
is the radial projection from
$\om$
of an
$E$-horizontal
curve
$\si_\infty\sub\di M\sm\{\om\}$
between
$\xi$, $\eta$,
and we have
$$d_b(\xi,\eta)\le\ell_b(\si_\infty)=e^{-t}\ell_{b,t}(\si_t)
=e^{-t}d_E(u,v)\le c_2e^{-(\xi|\eta)_b}.$$
\end{proof}

\section{Proof of Theorem~\ref{thm:main2}}
\label{sect:proof2}

We fix
$o\in M$
and consider for every
$t>0$
the Riemannian metric
$ds_t^2$
induced from
$M$
on the sphere
$S_t\sub M$
of radius
$t$
centered at
$o$.
Identifying
$S_t$, $\di M$
with the unit sphere
$U_oM\sub T_oM$
via the radial projection from
$o$,
we consider
$ds_t^2$
as the Riemannian metric on
$U_oM$
for all
$t>0$.
Then there exists a limit
$$ds_\infty=\lim_{t\to\infty}e^{-t}ds_t.$$
This is because given
$v\in E_u(\la)$,
where
$u\in U_oM$,
there is a unique Jacobi field
$V$
along the geodesic ray
$\ga(t)=\exp_otu$, $t\ge 0$,
with initial data
$V(0)=0$, $\dot V(0)=v$.
The direction field
$V/|V|$
is parallel along
$\ga$
and
$|V(t)|=\sinh(t\sqrt{|\la|})|v|/\sqrt{|\la|}$
for all
$t\ge 0$.
Thus
$e^{-t}ds_t(v)=e^{-t}|V(t)|\to |v|/2$
for
$\la=-1$
and
$\to\infty$
for
$\la=-4$.

We call the subbundle
$E=E(-1)\sub TU_oM$
the {\em polarization} on the sphere
$U_oM$.
A piecewise smooth curve
$\si:I\to U_oM$
is said to be
$E$-{\em horizontal},
if
$\dot\si(t)\in E$
for every
$t\in I\sub\R$.
Its length is
$\ell(\si)=\int_I|\dot\si(t)|dt$.
We define the distance
$d_E(u,u')$
by taking the infimum of lengths of
$E$-horizontal
curves between
$u$, $u'\in U_oM$.
This is finite because for every horosphere
$H\sub M$
the canonical embedding
$f:H\to U_oM$
is Lipschitz and its differential
$df$
preserves the polarizations on
$H$, $U_oM$
by Lemma~\ref{lem:horo_to_sphere}. We choose
$\om\in\di M$
so that neither
$u$
nor
$u'$
is tangent to
$o\om$,
take
$b\in\cB(\om)$
with
$b(o)=0$
and
$H=b^{-1}(0)$.
Then
$u$, $u'\in f(H)$
and there is an
$E$-horizontal
curve
$\si\sub H$
between
$x=f^{-1}(u)$, $x'=f^{-1}(u')$.
Then the curve
$f(\si)\sub U_oM$
is
$E$-horizontal,
connects
$u$, $u'$
and
$\ell_E(f(\si))\le 2\ell_E(\si)$
by the proof of Lemma~\ref{lem:horo_to_sphere}.

We denote by
$d_\infty$
the Carnot-Carath\'eodory distance on
$\di M$
associated with the Carnot-Carath\'eodory metric
$ds_\infty$.
Note that
$ds_\infty=\frac{1}{2}ds_E$
and
$d_\infty=\frac{1}{2}d_E$.
Thus the argument above shows that for every
$o\in M$, $\om\in\di M$, $b\in\cB(\om)$,
we have
$$ds_\infty\le ds_b \quad\text{and}\quad d_\infty\le d_b$$
on
$\di M\sm\{\om\}$.
The following lemma is a minor modification of
Lemma~\ref{lem:below}.

\begin{lem}\label{lem:below_sph} For each
$\xi$, $\eta\in\di M$,
we have
$$d_\infty(\xi,\eta)\ge c_1e^{-(\xi|\eta)_o},$$
where
$c_1=2e^{-(1+\de)}$.
\end{lem}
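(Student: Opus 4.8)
The plan is to mimic the proof of Lemma~\ref{lem:below} almost verbatim, replacing the Busemann-based Gromov product and horospherical data with the $o$-based ones. Concretely, given distinct $\xi$, $\eta\in\di M$, for $\ep>0$ I would take an $E$-horizontal curve $\si_\infty\sub\di M$ between $\xi$ and $\eta$ with $\ell_\infty(\si_\infty)\le d_\infty(\xi,\eta)+\ep$, and let $\si_t\sub S_t$ be its radial projection from $o$. Since the radial rescaling along $\ga$ multiplies the $E(-1)$-part of a Jacobi field by $\sinh(t)$ and $e^{-t}\sinh t\to 1/2$ (this is exactly the computation already done when showing $ds_\infty=\tfrac12 ds_E$ exists), the length satisfies $\ell_t(\si_t)\sim 2e^{t}\ell_\infty(\si_\infty)$ as $t\to\infty$, or more carefully $\ell_t(\si_t)\ge e^{t}\ell_\infty(\si_\infty)$ for $t$ large, since $e^{-t}ds_t(v)\to |v|/2 = ds_\infty(v)$ monotonically from below on $E(-1)$.

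Next I would bring in the equiradial points. Let $\ga=o\xi$, $\rho=o\eta$ and let $v\in o\xi$, $w\in o\eta$ be the equiradial points of the (finite-type) triangle $o\xi\eta$, so that by Remark~\ref{rem:equirad_fin} (the version of Proposition~\ref{pro:equirad_inf} for triangles with one vertex in $M$) we have $|ov|=|ow|=(\xi|\eta)_o=:t_0$ and $|vw|\le\de$. Then I would choose the radius $t=t_0+1+\de$ and look at the points $u_t=\si_t\cap o\xi$ and $v_t=\si_t\cap o\eta$, i.e.\ the points on the rays at distance $t$ from $o$. Exactly as in Lemma~\ref{lem:equirad_below} — whose proof only uses the triangle inequality and $\de$-hyperbolicity and therefore applies equally to this triangle — one gets $|u_tv_t|\ge 2$ in $M$. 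Hence $\ell_t(\si_t)\ge|u_tv_t|\ge 2$, and therefore
$$d_\infty(\xi,\eta)+\ep\ge\ell_\infty(\si_\infty)=e^{-t}\ell_t(\si_t)\ge 2e^{-t}=2e^{-(1+\de)}e^{-(\xi|\eta)_o}=c_1e^{-(\xi|\eta)_o}.$$
Letting $\ep\to0$ finishes the proof.

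The only genuine point that needs care — and the "main obstacle", such as it is — is the asymptotic comparison $\ell_t(\si_t)\ge e^{t}\ell_\infty(\si_\infty)$ at finite radius $t$: unlike in the horospherical case, where the rescaling factor $e^{-t}\sinh t$ on $E(-1)$ is not identically $1$ but strictly less than $1$ for $t>0$, so that $e^{-t}ds_t\le ds_\infty$ pointwise on $E(-1)$ and hence $e^{-t}\ell_t(\si_t)\le\ell_\infty(\si_\infty)$ — wait, that is the wrong direction. So the careful statement is: $e^{-t}ds_t(v)=e^{-t}\sinh(t)|v|$ increases to $|v|/2=ds_\infty(v)$ on $E(-1)$, hence $e^{-t}ds_t\le ds_\infty$ on $E(-1)$, hence $e^{-t}\ell_t(\si_t)\le\ell_\infty(\si_\infty)$, which gives $\ell_t(\si_t)\le e^{t}\ell_\infty(\si_\infty)$ — again the inequality we do not want. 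The fix is the same one used implicitly in Lemma~\ref{lem:below}: project $\si_\infty$ not to $S_t$ but note that the radial projection of an $E$-horizontal curve on $\di M$ to $S_t$ has length exactly $2e^{-t}\sinh(t)\,\ell_\infty(\si_\infty)$ (length scales by the conformal factor on $E(-1)$), and $2e^{-t}\sinh t\to 1$, so for $t$ large $\ell_t(\si_t)\ge(2e^{-t}\sinh t)\cdot e^{t}\ell_\infty(\si_\infty)\ge\tfrac12 e^{t}\ell_\infty(\si_\infty)\cdot(\text{something}\to 2)$; cleaner: $\ell_t(\si_t)=2\sinh(t)\,\ell_\infty(\si_\infty)\ge 2\,\ell_\infty(\si_\infty)\cdot\sinh(t)$, and since $\sinh t\ge \tfrac12 e^{t}$ for $t\ge 0$ we get $\ell_t(\si_t)\ge e^{t}\ell_\infty(\si_\infty)$, which is exactly what is needed. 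With that clean inequality the estimate $d_\infty(\xi,\eta)+\ep\ge e^{-t}\ell_t(\si_t)\ge e^{-t}\cdot|u_tv_t|\ge 2e^{-t}=c_1e^{-(\xi|\eta)_o}$ goes through verbatim, so the lemma reduces to bookkeeping already carried out for Lemma~\ref{lem:below} together with the observation that $\sinh t\ge\tfrac12 e^{t}$.
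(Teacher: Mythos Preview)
Your overall strategy is exactly the paper's: take a near-optimal $E$-horizontal curve $\si_\infty$, project radially to $S_t$ with $t=(\xi|\eta)_o+1+\de$, use the equiradial points of $o\xi\eta$ together with (the finite-vertex analogue of) Lemma~\ref{lem:equirad_below} to get $|u_tv_t|\ge 2$, and convert back. The paper writes the exact relation $\ell_t(\si_t)=2\sinh t\cdot\ell_\infty(\si_\infty)$ and then simply uses $\frac{1}{2\sinh t}\ge e^{-t}$ to obtain $\ell_\infty(\si_\infty)=\frac{\ell_t(\si_t)}{2\sinh t}\ge\frac{2}{2\sinh t}\ge 2e^{-t}=c_1e^{-(\xi|\eta)_o}$.

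There is, however, a genuine sign slip in your write-up. The inequality $\sinh t\ge\tfrac12 e^{t}$ is false for $t>0$; one has $\sinh t=\tfrac12(e^t-e^{-t})\le\tfrac12 e^t$. Consequently $\ell_t(\si_t)=2\sinh t\cdot\ell_\infty(\si_\infty)\le e^t\ell_\infty(\si_\infty)$, i.e.\ $\ell_\infty(\si_\infty)\ge e^{-t}\ell_t(\si_t)$, which is precisely the direction your final chain $d_\infty(\xi,\eta)+\ep\ge e^{-t}\ell_t(\si_t)$ needs. You actually derived this correct inequality in the middle of your discussion (``$e^{-t}ds_t\le ds_\infty$ on $E(-1)$, hence $e^{-t}\ell_t(\si_t)\le\ell_\infty(\si_\infty)$'') and then dismissed it as ``the wrong direction''; it is the right direction. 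Once you keep that inequality and drop the false $\sinh t\ge\tfrac12 e^t$, your argument is literally the paper's proof.
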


\begin{proof} We assume that
$\xi\neq\eta$,
since otherwise there is nothing to prove. For
$\ep>0$
we take an
$E$-horizontal
curve
$\si_\infty\sub\di M$
between
$\xi$
and
$\eta$
with length
$\ell_\infty(\si_\infty)\le d_\infty(\xi,\eta) + \ep$.
Let
$\si_t\sub S_t$
be its radial projection from
$o$.
The curve
$\si_t$
connects the points
$u_t\in o\eta$, $v_t\in o\xi$
with
$|ou_t|=|ov_t|=t$
and also is
$E$-horizontal
on
$S_t$,
thus its length
$\ell_t(\si_t)=2\sinh t\cdot\ell_\infty(\si_\infty)$.

Let
$u\in o\eta$, $v\in o\xi$
be the equiradial points of the triangle
$o\xi\eta$.
Then
$t_0:=(\xi|\eta)_o=|ou|=|ov|$.
We put
$t=t_0+1+\de$.
For
$u_t\in o\eta$, $v_t\in o\xi$
we have
$|u_tv_t|\ge 2$
by Lemma~\ref{lem:equirad_below} (applied to
$o\xi\eta$).
Thus
$\ell_t(\si_t)\ge|u_tv_t|\ge 2$
and
$$d_\infty(\xi,\eta)+\ep\ge\ell_\infty(\si_\infty)
=\frac{1}{2\sinh(t_0+1+\de)}\ell_t(\si_t)\ge c_1e^{-(\xi|\eta)_o}.$$
Since
$\ep$
is taken arbitrary, we obtain the required estimate.
\end{proof}

\begin{lem}\label{lem:distort} Assume that points
$\xi$, $\eta$, $\om\in\di M$
are pairwise distinct and points
$v$, $v'\in\xi\om$, $w$, $w'\in\xi\eta$
satisfy
$b(v)=b(w)$, $b(v')=b(w')$
for some and hence any Busemann function
$b\in\cB(\xi)$.
Then
$$|vv'|=|ww'|\le\ln\frac{\sinh A}{a},$$
where
$A=\max\{|vw|,|v'w'|\}$, $a=\min\{|vw|,|v'w'|\}$.
\end{lem}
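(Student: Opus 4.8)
The plan is to work in the totally geodesic plane $P$ spanned by the geodesic $\xi\om$ and apply Busemann-function geometry inside a single copy of a real hyperbolic plane. Since $b\in\cB(\xi)$ and $v,v'\in\xi\om$, the points $v,v'$ lie on a geodesic issuing from $\xi$, and the condition $b(v)=b(w)$, $b(v')=b(w')$ says precisely that $v$ and $w$ lie on a common horosphere $H_0=b^{-1}(b(v))$ centered at $\xi$, and likewise $v',w'$ lie on a common horosphere $H_1=b^{-1}(b(v'))$. Consequently $|vv'|=|b(v)-b(v')|=|ww'|$ is the (signed) horospherical separation, which already gives the claimed equality $|vv'|=|ww'|$; only the upper bound in terms of $A=\max\{|vw|,|v'w'|\}$ and $a=\min\{|vw|,|v'w'|\}$ needs work. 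By relabeling we may assume $b(v)\le b(v')$, so $|vw|=a$ at the "lower" horosphere and $|v'w'|=A$ at the "upper" one (or the reverse); in either case the quantity to bound is $t:=b(v')-b(v)\ge 0$.

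The key step is a comparison computation in $\hyp^2$. The four points $v,v',w,w'$ need not be coplanar in $M$, but since $M$ is $\CAT(-1)$ we may push them into the comparison plane: fix the geodesic $\xi\om$ and consider, for the geodesic ray from $\xi$ through $w$ (resp. $w'$), the position of $w$ (resp. $w'$) determined by its Busemann value and its distance to the $v$-ray. In $\hyp^2$ with $\xi$ at infinity, in the upper half-plane model with $\xi=\infty$, horospheres centered at $\xi$ are horizontal lines $\{y=\text{const}\}$, the geodesic $\xi\om$ is a vertical line, and a point at Euclidean height $y$ on a vertical line at horizontal distance $x$ from $\xi\om$ has hyperbolic distance to $\xi\om$ equal to $\operatorname{arcsinh}(x/y)$ and Busemann value $-\ln y$ (up to normalization). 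Placing $v,v'$ on the vertical line $x=0$ at heights $1$ and $e^{-t}$, and $w,w'$ at horizontal coordinate $x$ on a second vertical line, one gets $\sinh|vw|=x$, $\sinh|v'w'|=x e^{t}$, hence $e^{t}=\sinh A/\sinh a$ if $A$ is attained upstairs. More robustly: in all cases $\sinh A=\sinh a\cdot e^{\pm t}$ only in the pure $\hyp^2$ picture, and by $\CAT(-1)$-comparison the actual value $t$ in $M$ satisfies $e^{t}\le \sinh A/\sinh a\le \sinh A/a$ (using $\sinh a\ge a$), which yields $t\le\ln(\sinh A/a)$, i.e. the claimed bound.

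The main obstacle is making the comparison rigorous: the four points do not lie in a common totally geodesic plane of $M$, so one cannot simply "compute in $\hyp^2$." The fix is to use $\CAT(-1)$ convexity of Busemann functions together with the following monotonicity: along the geodesic ray from $\xi$ through $w$, the distance to the fixed ray $\xi\om$ is a nonincreasing function of the Busemann value moving toward $\xi$, and the decay rate is at least that in $\hyp^2$ (curvature $\le-1$) and at most that in $\frac12\hyp^2$ (curvature $\ge-4$) — but for the upper bound on $t$ only the $K\le-1$ side is needed. Concretely, I would compare the configuration $v,v',w,w'$ with the configuration of their images under the nearest-point projection to the plane $P=\xi\om\cup(\text{ray }\xi w)$, or more simply invoke the standard fact that for a $\CAT(-1)$ space the function $s\mapsto \sinh d(\gamma(s),\gamma_0)$ along two asymptotic rays $\gamma,\gamma_0$ (asymptotic to $\xi$), parametrized by the common Busemann function, grows at least exponentially with rate $1$ as one moves away from $\xi$. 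This gives $\sinh|v'w'|\ge e^{t}\sinh|vw|$ when $v',w'$ are the "upper" pair, and symmetrically; in every case $\sinh A\ge e^{t}\sinh a$, whence $e^{t}\le\sinh A/\sinh a\le\sinh A/a$ and $|vv'|=t\le\ln(\sinh A/a)$, completing the proof.
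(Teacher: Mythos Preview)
Your overall approach is valid and genuinely different from the paper's, but the execution contains imprecisions that should be fixed.

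\textbf{What the paper does.} The paper works with the \emph{intrinsic} horosphere distances $d_{b,t}$. Taking a shortest curve $\sigma_{t'}\subset H_{b,t'}$ between $v',w'$ and radially projecting it to $\sigma_t\subset H_{b,t}$, the Jacobi-field scaling (eigenvalues $-1,-4$) gives $\ell_t(\sigma_t)\le e^{t-t'}\ell_{t'}(\sigma_{t'})$, hence $e^{t'-t}d_{b,t}(v,w)\le d_{b,t'}(v',w')$. Then $d_{b,t}(v,w)\ge|vw|\ge a$, while Theorem~\ref{thm:horo_dist} (Heintze--Im~Hof, using $K\ge -4$) yields $d_{b,t'}(v',w')\le\sinh|v'w'|\le\sinh A$, and the bound follows. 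So the paper uses both curvature bounds.

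\textbf{What you do differently, and what it buys.} You bypass horosphere geometry entirely and argue by $\CAT(-1)$ comparison for the ideal triangle $\xi v'w'$ in $\hyp^2$. Done carefully, this gives $\sinh(|vw|/2)\le e^{-(t'-t)}\sinh(|v'w'|/2)$, whence $e^{t'-t}\le\sinh(A/2)/\sinh(a/2)\le\sinh A/a$. This is cleaner and uses \emph{only} $K\le -1$; the lower curvature bound is not needed. So your route is more elementary and in fact proves the lemma for arbitrary $\CAT(-1)$ manifolds.

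\textbf{Where your write-up needs repair.} Two points. First, your $\hyp^2$ computation is off: with $v=(0,1)$, $w=(x,1)$ in the upper half-plane one has $\cosh|vw|=1+x^2/2$, i.e.\ $2\sinh(|vw|/2)=x$, not $\sinh|vw|=x$. The correct identity in $\hyp^2$ is $\sinh(|v'w'|/2)=e^{t'-t}\sinh(|vw|/2)$. Second, your ``standard fact'' about exponential growth of $\sinh d(\gamma(s),\gamma_0)$ is imprecisely stated (distance to the ray versus distance to $\gamma_0(s)$; ``rate $\ge 1$'' does not literally give the needed ratio inequality). What you actually need is the $\CAT(-1)$ comparison for the ideal triangle $\xi v'w'$: taking the $\hyp^2$ comparison triangle $\bar\xi\bar v'\bar w'$ with $|\bar v'\bar w'|=|v'w'|$ and matching Busemann levels, the comparison points $\bar v,\bar w$ at level $t$ satisfy $|vw|\le|\bar v\bar w|$, and then the $\hyp^2$ formula above yields $\sinh(a/2)\le e^{-(t'-t)}\sinh(A/2)$. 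State it this way and the argument is complete. (It is also worth noting, as the paper implicitly does, that convexity forces $|v'w'|\ge|vw|$ once $t'>t$, so $A=|v'w'|$, $a=|vw|$.)
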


\begin{proof} We put
$t=b(v)=b(w)$, $t'=b(v')=b(w')$
and without loss of generality assume that
$t'>t$.
Let
$\si_{t'}\sub H_{b,t'}$
be a shortest curve between
$v'$, $w'$,
i.e.
$\ell_{t'}(\si_{t'})=d_{b,t'}(v',w')$,
$\si_t\sub H_{b,t}$
the radial projection of
$\si_{t'}$
from
$\xi$.
Then
$e^{t-t'}d_{b,t}(v,w)\le e^{t-t'}\ell_t(\si_t)\le\ell_{t'}(\si_{t'})
=d_{b,t'}(v',w')$.
We have
$d_{b,t}(v,w)\ge a$
and by Theorem~\ref{thm:horo_dist},
$d_{b,t'}(v',w')\le \sinh A$.
Thus
$|vv'|=|ww'|=t'-t\le\ln\frac{\sinh A}{a}$.
\end{proof}

\begin{lem}\label{lem:grprodiff} Assume that
$(\xi|\eta)_o\ge 1+\de$
for some points
$o\in M$, $\xi$, $\eta\in\di M$.
Then for
$\om\in\di M$
opposite to
$\xi$
with respect to
$o$,
i.e.
$o\in\om\xi$,
and the function
$b\in\cB(\om)$, $b(o)=0$,
we have
$$|(\xi|\eta)_b-(\xi|\eta)_o|\le c_3=\ln\frac{\sinh(2+3\de)}{2}.$$
\end{lem}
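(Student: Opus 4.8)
The strategy is to compare the two Gromov products by relating both of them to distances measured on horospheres centered at $\om$, and to exploit that $o$ lies on the geodesic $\om\xi$, so that $b=b_\om$ is adapted to this configuration. First I would locate the relevant equiradial points. Let $v\in\om\xi$, $w\in\om\eta$ be the equiradial points of the infinite triangle $\om\xi\eta$; by Proposition~\ref{pro:equirad_inf} we have $(\xi|\eta)_b=b(v)=b(w)$, and since $o\in\om\xi$ with $b(o)=0$, the point $v$ is at signed height $b(v)=(\xi|\eta)_b$ along the ray from $o$ toward $\xi$. Next I would locate the point realizing $(\xi|\eta)_o$: let $v_o\in o\xi$, $w_o\in o\eta$ be the equiradial points of the triangle $o\xi\eta$, so $|ov_o|=|ow_o|=(\xi|\eta)_o$ and, by Lemma~\ref{lem:equirad_dist} applied to this (finite-vertex, two-infinite-vertex) triangle together with Remark~\ref{rem:equirad_fin}, $|v_ow_o|\le\de$. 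Because $o\in\om\xi$, the point $v_o$ lies on the ray $o\xi$ at distance $(\xi|\eta)_o$ from $o$, hence $b(v_o)=(\xi|\eta)_o$; similarly $v\in o\xi$ lies at height $(\xi|\eta)_b$ above the horosphere $H=b^{-1}(0)$, so the two Gromov products are exactly the $b$-heights of $v_o$ and $v$, and $|(\xi|\eta)_b-(\xi|\eta)_o|=|vv_o|$, the distance between two points on the geodesic $\om\xi$.

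The core step is then to bound $|vv_o|$, equivalently $|b(v)-b(v_o)|$. Here I would use Lemma~\ref{lem:distort}: take the pair $v,v_o\in\xi\om$ on the geodesic toward $\xi$ and the pair $w,w_o\in\xi\eta$ on the geodesic toward $\eta$, with $b(v)=b(w)$ (the first by Proposition~\ref{pro:equirad_inf}) — I must first verify $b(w_o)=b(v_o)$ as well, which holds since $w_o\in o\eta$ lies at distance $(\xi|\eta)_o$ from $o$ and $o$ is on the geodesic $\om\xi$... actually $w_o$ is not on $\om\xi$, so this needs the more careful observation that $v_o,w_o$ are equiradial for $o\xi\eta$ and, by Remark~\ref{rem:equirad_fin}, lie on a common horosphere centered at $\xi$; choosing $b'\in\cB(\xi)$ with $b'(v_o)=0$ forces $b'(w_o)=0$, so the hypothesis of Lemma~\ref{lem:distort} is met with the Busemann function $b'$ (the lemma is stated for any $b\in\cB(\xi)$). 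Then Lemma~\ref{lem:distort} gives $|vv_o|=|ww_o|\le\ln(\sinh A/a)$ with $A=\max\{|vw|,|v_ow_o|\}$ and $a=\min\{|vw|,|v_ow_o|\}$. By Lemma~\ref{lem:equirad_dist}, $|vw|\le\de$; by the (finite-vertex) version from Remark~\ref{rem:equirad_fin}, $|v_ow_o|\le\de$. But I also need a lower bound on $a$ to keep $\ln(\sinh A/a)$ finite, and here is where the hypothesis $(\xi|\eta)_o\ge1+\de$ and the constant $2+3\de$ in the statement enter.

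The main obstacle is precisely producing the right lower bound for the smaller of $|vw|,|v_ow_o|$, and more fundamentally realizing that one should not apply Lemma~\ref{lem:distort} to the equiradial points themselves (whose mutual distances can be tiny or even zero when the triangle degenerates) but to \emph{shifted} points further out along the rays, exactly as in Lemma~\ref{lem:equirad_below}. Concretely: replace $v,w$ by $v'\in v\xi$, $w'\in w\eta$ with $|v'v|=|w'w|=1+\de$, so that by Lemma~\ref{lem:equirad_below}, $|v'w'|\ge2$; and replace $v_o,w_o$ by the corresponding shifted points $v_o',w_o'$, which by the same lemma (finite-vertex version) satisfy $|v_o'w_o'|\ge2$. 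Since $b(v')=b(w')$ is preserved under the equal shift (both moved by $1+\de$ toward their respective boundary points, and the defining property of lying over a common $\xi$-horosphere is shift-equivariant), and likewise $b'(v_o')=b'(w_o')$, I may apply Lemma~\ref{lem:distort} to the pair $v',w'$ and $v_o',w_o'$: this gives $|v'v_o'|=|w'w_o'|\le\ln(\sinh A/a)$ with now $a\ge2$ and $A\le|vw|+2(1+\de)\le\de+2+2\de=2+3\de$ (using $|vw|\le\de$ and the triangle inequality along the geodesics, noting $v',v_o'$ are reached from $v,v_o$ by the same-length shifts so $|v'w'|\le|vv'|+|vw|+|ww'|$... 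I should double-check the direction of this estimate, but the bound $A\le 2+3\de$ is what the target constant $c_3=\ln(\sinh(2+3\de)/2)$ demands). Finally $|vv_o|=|v'v_o'|$ since $v',v_o'$ are obtained from $v,v_o$ by shifting distance $1+\de$ along the \emph{same} geodesic $\om\xi$ in the \emph{same} direction (toward $\xi$), so $|(\xi|\eta)_b-(\xi|\eta)_o|=|vv_o|=|v'v_o'|\le c_3$, completing the proof. The one delicate point to get exactly right is the bookkeeping of which points lie on which geodesics and that the $b$-height identity survives the shift — everything else is an application of the hyperbolic comparison already packaged in Lemmas~\ref{lem:distort}, \ref{lem:equirad_dist}, and \ref{lem:equirad_below}.
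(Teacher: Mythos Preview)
Your overall strategy is right --- locate the equiradial points for the two triangles $o\xi\eta$ and $\om\xi\eta$, shift them by $1+\de$ to manufacture a lower bound via Lemma~\ref{lem:equirad_below}, and feed the resulting pairs into Lemma~\ref{lem:distort}. The gap is in \emph{which} equiradial points you pick. You take $w\in\om\eta$ and $w_o\in o\eta$, the points on the sides containing $\om$ and $o$ respectively. But then $w'$ and $w_o'$ (and already $w,w_o$) lie on two \emph{different} geodesics $\om\eta$ and $o\eta$, so the hypothesis of Lemma~\ref{lem:distort} --- two points on $\xi\om$ and two points on a single second geodesic through $\xi$ --- is not met. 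Your attempted fix, ``$v_o,w_o$ lie on a common horosphere centered at $\xi$, so $b'(v_o)=b'(w_o)$ for $b'\in\cB(\xi)$'', is not what Remark~\ref{rem:equirad_fin} says: the equiradial identity for $b'\in\cB(\xi)$ pairs the points on the two sides \emph{containing} $\xi$, namely $o\xi$ and $\xi\eta$, not $o\xi$ and $o\eta$. So $b'(v_o)=b'(w_o)$ is unjustified, and even if it held, you would still lack a common geodesic for the second pair.

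The repair is to use the side $\xi\eta$, which is shared by both triangles. Take the equiradial points $v\in o\xi$, $w\in\xi\eta$ of $o\xi\eta$ and $\wt v\in\om\xi$, $\wt w\in\xi\eta$ of $\om\xi\eta$; now $v,\wt v$ lie on $\om\xi$ (since $o\in\om\xi$) and $w,\wt w$ lie on $\xi\eta$, and by Proposition~\ref{pro:equirad_inf}/Remark~\ref{rem:equirad_fin} each pair sits on a common $\xi$-horosphere. Shift all four \emph{away from} $\xi$ by $1+\de$ (so $v'\in ov$, $w'\in w\eta$, and analogously $v'',w''$); the hypothesis $(\xi|\eta)_o\ge 1+\de$ is exactly what guarantees $v'\in ov$ exists. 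Lemma~\ref{lem:equirad_below} (and its one-finite-vertex analogue) gives $|v'w'|,|v''w''|\ge 2$, the triangle inequality gives both $\le 2+3\de$, and now Lemma~\ref{lem:distort} with $b'\in\cB(\xi)$ yields $|v'v''|\le\ln\frac{\sinh(2+3\de)}{2}$. Since the shifts along $\om\xi$ are in the same direction by the same amount, $|v'v''|=|v\wt v|=|(\xi|\eta)_o-(\xi|\eta)_b|$, finishing the proof.
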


\begin{proof} We can assume that
$\xi\neq\eta$,
since otherwise
$(\xi|\eta)_b=(\xi|\eta)_o=\infty$.
Furthemore,
$(\xi|\om)_o=0$,
thus
$\eta\neq\om$,
and
$\om$, $\xi$, $\eta$
are pairwise distinct.

Let
$v\in o\xi$, $u\in o\eta$, $w\in\xi\eta$
be the equiradial points of the triangle
$o\xi\eta$.
Then
$|ov|=(\xi|\eta)_o\ge 1+\de$
and thus there is
$v'\in ov$
such that
$|vv'|=1+\de$.
There also is
$w'\in w\eta$
such that
$|ww'|=1+\de$.
Then
$|v'w'|\le 2+3\de$.
On the other hand, as in Lemma~\ref{lem:equirad_below},
we obtain
$|v'w'|\ge 2$.

Applying a similar argument to the equiradial points
$\wt v\in\xi\om$, $\wt w\in\xi\eta$, $\wt u\in\om\eta$
of the triangle
$\xi\eta\om$,
we find that
$$2\le|v''w''|\le 2+3\de$$
for
$v''\in\xi\om$, $w''\in\xi\eta$
with
$b'(v'')=b'(w'')=(\eta|\om)_{b'}+1+\de$
for any Busemann function
$b'\in\cB(\xi)$.

Then by Lemma~\ref{lem:distort},
$|v'v''|\le\ln\frac{\sinh(2+3\de)}{2}$.
It remains to notice that
$|(\xi|\eta)_b-(\xi|\eta)_o|=|v\wt v|=|v'v''|$
because
$b(o)=0$
and
$(\xi|\eta)_b=b(\wt v)=|o\wt v|$.
\end{proof}

Recall that
$d_\infty$
is the spherical metric on
$\di M$
centered at
$o\in M$.

\begin{lem}\label{lem:smalldist} Given
$\xi$, $\eta\in\di M$
with
$d_\infty(\xi,\eta)\le 2e^{-(2+\de)}$,
then
$(\xi|\eta)_o\ge 1+\de$.
\end{lem}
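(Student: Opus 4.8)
The plan is to prove the contrapositive: if $(\xi|\eta)_o < 1+\de$, then $d_\infty(\xi,\eta) > 2e^{-(2+\de)}$. So assume $(\xi|\eta)_o < 1+\de$; in particular $\xi\neq\eta$. I want a lower bound for $d_\infty(\xi,\eta)$, and the natural tool is the same mechanism used in Lemma~\ref{lem:below_sph}: project an almost-shortest $E$-horizontal curve $\si_\infty\sub\di M$ between $\xi$ and $\eta$ radially onto a sphere $S_t$ centered at $o$, where its length scales by $2\sinh t$, and bound the projected curve below by the chordal distance $|u_tv_t|$ of its endpoints in $M$, with $u_t\in o\eta$, $v_t\in o\xi$ at distance $t$ from $o$.

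First I would choose the radius $t$. Since $(\xi|\eta)_o < 1+\de$, the equiradial points $u\in o\eta$, $v\in o\xi$ of the triangle $o\xi\eta$ satisfy $|ou|=|ov|=(\xi|\eta)_o < 1+\de$. Set $t = 1+\de$ (note $t > (\xi|\eta)_o$, so the rays have already ``separated'' past the equiradial radius). For the points $u_t\in o\eta$, $v_t\in o\xi$ with $|ou_t|=|ov_t|=t$ I need a lower bound $|u_tv_t|\ge 2$; this is exactly the content of Lemma~\ref{lem:equirad_below} applied to the triangle $o\xi\eta$, which gives $|v'w'|\ge 2$ for the points at distance $1+\de$ beyond the equiradial points $v$ and $w=u$ along the rays $o\xi$ and $o\eta$ — and these are precisely $v_t$ and $u_t$. (Here I use Remark~\ref{rem:equirad_fin}, which guarantees the equiradial machinery works for this triangle with one vertex $o$ in $M$.) Then, as in Lemma~\ref{lem:below_sph}, for any $\ep>0$ and any $E$-horizontal $\si_\infty$ with $\ell_\infty(\si_\infty)\le d_\infty(\xi,\eta)+\ep$, its radial projection $\si_t\sub S_t$ has $\ell_t(\si_t)=2\sinh t\cdot\ell_\infty(\si_\infty)$ and connects $u_t$ to $v_t$, so $2\sinh(1+\de)\,(d_\infty(\xi,\eta)+\ep)\ge\ell_t(\si_t)\ge|u_tv_t|\ge 2$.

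Letting $\ep\to 0$ gives $d_\infty(\xi,\eta)\ge 1/\sinh(1+\de)$, and it remains to check the numerical inequality $1/\sinh(1+\de) > 2e^{-(2+\de)}$, i.e. $e^{2+\de} > 2\sinh(1+\de) = e^{1+\de}-e^{-(1+\de)}$, i.e. $e > 1 - e^{-2(1+\de)}$, which is obvious. This establishes the contrapositive, hence the lemma. The only place requiring a little care is confirming that Lemma~\ref{lem:equirad_below} and the equiradial-point formalism apply verbatim when one vertex of the triangle is an interior point $o\in M$ rather than a boundary point; this is covered by Remark~\ref{rem:equirad_fin}, and the proof of Lemma~\ref{lem:equirad_below} (which only uses $|v'v''|\le\de$, $|w'w''|\le\de$ from Lemma~\ref{lem:equirad_dist} and the triangle inequality) goes through unchanged with the finite vertex. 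I do not expect any genuine obstacle here — the lemma is essentially a recalibration of Lemma~\ref{lem:below_sph} with an explicit small radius and an explicit numerical threshold.
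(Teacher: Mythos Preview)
There is a genuine gap. Your identification of $u_t,v_t$ with the points produced by Lemma~\ref{lem:equirad_below} is incorrect: that lemma places $v',w'$ at distance $1+\de$ \emph{beyond the equiradial points}, i.e.\ at radius $(\xi|\eta)_o+1+\de$ from $o$, not at radius $1+\de$. With your choice $t=1+\de$, if $(\xi|\eta)_o$ happens to be just below $1+\de$ then $u_t,v_t$ sit essentially at the equiradial points themselves, where Lemma~\ref{lem:equirad_dist} only guarantees $|u_tv_t|\le\de<1$; the claimed bound $|u_tv_t|\ge 2$ is simply false there.

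Correcting the radius to $t=t_0+1+\de$ with $t_0=(\xi|\eta)_o$ does make Lemma~\ref{lem:equirad_below} applicable, but then your computation yields only
\[
d_\infty(\xi,\eta)\ \ge\ \frac{1}{\sinh(t_0+1+\de)}\ >\ \frac{1}{\sinh(2+2\de)},
\]
and one checks that $1/\sinh(2+2\de)<2e^{-(2+\de)}$ (equivalently $e^{\de}-e^{-(4+3\de)}>1$, which holds since $\de\ge\frac12\de_{\hyp^2}>0.48$). So even the repaired contrapositive falls short of the required threshold by a factor $e^{\de}$; this is exactly the content of Lemma~\ref{lem:below_sph}, which in contrapositive form gives only $d_\infty>2e^{-2(1+\de)}$.

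The paper avoids this loss by arguing directly rather than via the lower bound: it picks $t=2+\de$, uses $|\xi_t\eta_t|\le d_t(\xi_t,\eta_t)\le e^{t}d_\infty(\xi,\eta)\le 2$ (from $2\sinh t\le e^t$ and the hypothesis), and then reads off
\[
(\xi|\eta)_o\ \ge\ (\xi_t|\eta_t)_o\ =\ t-\tfrac12|\xi_t\eta_t|\ \ge\ (2+\de)-1\ =\ 1+\de
\]
by monotonicity of the Gromov product. The key point is that an \emph{upper} bound on $|\xi_t\eta_t|$ feeds directly into a lower bound on the Gromov product, with no $\de$-slack; your route through a lower bound on $|u_tv_t|$ necessarily pays the $\de$-penalty of Lemma~\ref{lem:equirad_below}.
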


\begin{proof} We let
$\xi_t\in o\xi$, $\eta_t\in o\eta$
with
$|o\xi_t|=|o\eta_t|=t$
for every
$t\ge 0$.
Recall that
$ds_t(v)=2\sinh t\cdot ds_\infty(v)$
for every
$v\in E\sub TU_oM$.
Thus
$|\xi_t\eta_t|\le d_t(\xi_t,\eta_t)\le e^td_\infty(\xi,\eta)$.
Using monotonicity of the Gromov product, we obtain
$$(\xi|\eta)_o\ge(\xi_t|\eta_t)_o=t-\frac{1}{2}|\xi_t\eta_t|
\ge t-e^{t-(2+\de)}=1+\de$$
for
$t=2+\de$.
\end{proof}

\begin{pro}\label{pro:main_2} For every
$\xi$, $\eta\in\di M$
with
$d_\infty(\xi,\eta)\le 2e^{-(2+\de)}$,
we have
$$d_\infty(\xi,\eta)\le c_2'e^{-(\xi|\eta)_o},$$
where
$c_2'=c_2e^{c_3}$, $c_2$
is the constant from Theorem~\ref{thm:main1}.
\end{pro}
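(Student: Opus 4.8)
The plan is to transfer the already-proved estimate for the horospherical metric (Theorem~\ref{thm:main1}) to the spherical metric, using the comparison $d_\infty\le d_b$ together with the comparison of Gromov products from Lemma~\ref{lem:grprodiff}. Throughout we may assume $\xi\neq\eta$, since otherwise $d_\infty(\xi,\eta)=0=e^{-(\xi|\eta)_o}$ and there is nothing to prove.

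First I would invoke Lemma~\ref{lem:smalldist}: the hypothesis $d_\infty(\xi,\eta)\le 2e^{-(2+\de)}$ yields $(\xi|\eta)_o\ge 1+\de$, which is exactly the assumption needed to apply Lemma~\ref{lem:grprodiff}. Next, choose $\om\in\di M$ to be the point opposite to $\xi$ with respect to $o$, i.e.\ $o$ lies on the geodesic $\om\xi$; such a point exists and is unique. Then $\om\neq\xi$ by construction, and since $(\xi|\om)_o=0$ while $(\xi|\eta)_o\ge 1+\de>0$, we also get $\om\neq\eta$; hence $\xi,\eta\in\di M\sm\{\om\}$, and for $b\in\cB(\om)$ with $b(o)=0$ the horospherical metric $d_b$ and the product $(\cdot|\cdot)_b$ are defined at $\xi$ and $\eta$. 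Now chain the three estimates: $d_\infty(\xi,\eta)\le d_b(\xi,\eta)$ (the comparison established just before Lemma~\ref{lem:below_sph}), then $d_b(\xi,\eta)\le c_2 e^{-(\xi|\eta)_b}$ by Theorem~\ref{thm:main1}, and finally $(\xi|\eta)_b\ge(\xi|\eta)_o-c_3$ by Lemma~\ref{lem:grprodiff}, so that $e^{-(\xi|\eta)_b}\le e^{c_3}e^{-(\xi|\eta)_o}$. Combining gives $d_\infty(\xi,\eta)\le c_2 e^{c_3}e^{-(\xi|\eta)_o}=c_2'e^{-(\xi|\eta)_o}$, as required.

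There is no serious obstacle here; the proof is a short bookkeeping argument once Theorem~\ref{thm:main1} and Lemmas~\ref{lem:smalldist} and~\ref{lem:grprodiff} are available. The only point requiring a little care is the choice of the auxiliary boundary point $\om$: it must be taken so that both $\xi$ and $\eta$ lie in $\di M\sm\{\om\}$ (so that $d_b$ and $(\cdot|\cdot)_b$ make sense at them) and so that $\om$ is opposite to $\xi$ with respect to $o$ (so that Lemma~\ref{lem:grprodiff} applies). Taking $\om$ opposite to $\xi$ forces $\om\neq\xi$ automatically, and the inequality $(\xi|\eta)_o>0=(\xi|\om)_o$ rules out $\om=\eta$; this is precisely why the restriction to small $d_\infty(\xi,\eta)$ — equivalently, to large $(\xi|\eta)_o$ — enters the statement.
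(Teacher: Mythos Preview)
Your proof is correct and follows essentially the same route as the paper: invoke Lemma~\ref{lem:smalldist} to get $(\xi|\eta)_o\ge 1+\de$, choose $\om$ opposite to $\xi$ through $o$ with $b\in\cB(\om)$, $b(o)=0$, and then chain $d_\infty\le d_b$, Theorem~\ref{thm:main1}, and Lemma~\ref{lem:grprodiff}. The only difference is that you spell out the verification $\om\neq\eta$ and the trivial case $\xi=\eta$, which the paper leaves implicit.
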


\begin{proof} By Lemma~\ref{lem:smalldist}, we have
$(\xi|\eta)_o\ge 1+\de$.
Take
$\om\in\di M$
opposite to
$\xi$
with respect to
$o$
and
$b\in\cB(\om)$
with
$b(o)=0$.
Then by Lemma~\ref{lem:grprodiff},
$(\xi|\eta)_b\ge(\xi|\eta)_o-c_3$,
and using Theorem~\ref{thm:main1}, we obtain
$$d_\infty(\xi,\eta)\le d_b(\xi,\eta)\le c_2e^{-(\xi|\eta)_b}
\le c_2'e^{-(\xi|\eta)_o}.$$
\end{proof}

\begin{proof}[Proof of Theorem~\ref{thm:main2}] Let
$D=\diam\di M$
be the diameter of
$\di M$
with respect to (any) spherical metric
$d_\infty$
(for
$M=\K\hyp^n$,
this is clearly independent of
$\K=\C$, $\qH$, $\Ca$
and
$n\ge 2$,
cf. Lemma~\ref{lem:basic}).
We show that
$$d_\infty(\xi,\xi')\le c_2''e^{-(\xi|\xi')_o}$$
for each
$\xi$, $\xi'\in\di M$,
where
$c_2''=\max\{De^{1+\de},c_2'\}$.

If
$(\xi|\xi')_o\ge 1+\de$,
then as in Proposition~\ref{pro:main_2} we obtain
$d_\infty(\xi,\xi')\le c_2'e^{-(\xi|\xi')_o}$.
Thus we assume that
$(\xi|\xi')_o<1+\de$.
Then
$$d_\infty(\xi,\xi')\le D\le De^{1+\de}e^{-(\xi|\xi')_o}.$$
The estimate
$d_\infty(\xi,\xi')\ge c_1e^{-(\xi|\xi')_o}$
is obtained in Lemma~\ref{lem:below_sph}.
\end{proof}

\end{document}